\newtheorem {theorem}{Theorem}[section]
\newtheorem {proposition}{Proposition}[section]
\newtheorem {corollary}{Corollary}[section]
\newtheorem {lemma}{Lemma}[section]
\newtheorem {example}{Example}[section]
\newtheorem {definition}{Definition}[section]
\newtheorem {remark}{Remark}[section]
\newcommand{\R}{{\mathbb R}}
\newcommand{\N}{\mathbb{N}}
\newcommand{\M}{\mathbf{M}}
\newcommand{\bx}{\textrm{\bf{x}}}
\newcommand{\bz}{\textrm{\bf{z}}}
\newcommand{\bb}{\textrm{\bf{b}}}
\newcommand{\by}{\textrm{\bf{y}}}
\newcommand{\bw}{\textrm{\bf{w}}}
\newcommand{\bze}{\boldsymbol 0}
\newcommand{\blambda}{{\boldsymbol \lambda}}
\newcommand{\ba}{{\boldsymbol \alpha}}
\newcommand{\bbe}{{\boldsymbol \beta}}
\newcommand{\bxi}{{\boldsymbol \xi}}
\newcommand{\bzeta}{{\boldsymbol \zeta}}
\newcommand{\leqnomode}{\tagsleft@true}
\newcommand{\reqnomode}{\tagsleft@false}
\title{First-order SDSOS-convex semi-algebraic optimization and exact SOCP relaxations}
\author{Chengmiao Yang}
\address[Chengmiao Yang]{Academy for Advanced Interdisciplinary Studies, Northeast Normal University, Changchun 130024, Jilin Province, China}
\email{cmyang@nenu.edu.cn}
\author{Liguo Jiao}
\address[Liguo Jiao]{Academy for Advanced Interdisciplinary Studies, Northeast Normal University, Changchun 130024, Jilin Province, China; Shanghai Zhangjiang Institute of Mathematics, Shanghai 201203, China}
\email{jiaolg356@nenu.edu.cn; hanchezi@163.com}
\author{Jae Hyoung Lee$^{\dag}$}
\address[Jae Hyoung Lee]{Department of Applied Mathematics, Pukyong National University, Busan, 48513, Korea}
\email{mc7558@naver.com}
\thanks{$^{\dag}$Corresponding Author}
\date{\today}
\begin{document}
	
\begin{abstract}
In this paper, we define a new type of nonsmooth convex function, called {\em first-order SDSOS-convex semi-algebraic function}, which is an extension of the previously proposed first-order SDSOS-convex polynomials (Chuong et al. in J Global Optim 75:885--919, 2019).
This class of nonsmooth convex functions contains many well-known functions, such as the Euclidean norm,  the $\ell_1$-norm commonly used in compressed sensing and sparse optimization, and the least squares function frequently employed in machine learning and regression analysis.
We show that, under suitable assumptions, the optimal value and optimal solutions of first-order SDSOS-convex semi-algebraic programs can be found by exactly solving an associated second-order cone programming problem.
Finally, an application to robust optimization with first-order SDSOS-convex polynomials is discussed.

\end{abstract}

\subjclass[2020]{90C32; 90C22; 90C23}

\keywords{Convex optimization programs; second-order cone programming; first-order SDSOS-convex semi-algebraic function.}

\maketitle
	
\section{Introduction}\label{sect:1}
Consider a standard convex optimization problem of the following form
\begin{align*}
\min _{\bx \in \mathbb{R}^{n}} \left\{f_{0}(\bx) \colon  f_{i}(\bx)\le 0, \, i = 1, \ldots, m\right\},
\end{align*}
where $f_{i}\colon\mathbb{R}^n\to\mathbb{R},$  $i = 0, 1, \ldots, m,$ are convex functions.
An important property of convex functions is that a local optimum (if exists) is also a global one.
This characteristic has led to widespread applications of convex optimization in fields such as engineering, economics, and data science \cite{Boyd2004,Ben2001}.
However, determining whether a given function is convex---especially in high-dimensional settings---has been shown to be an NP-hard problem, even in the case of polynomials~\cite{Ahmadi2013-NP}.

To address this challenge, researchers have focused on polynomials with structured representations, and proposed the concept of SOS-convex polynomial \cite{Helton2010} as a computable sufficient condition for convexity.
A key feature of SOS-convex polynomials is that verifying SOS-convexity can be equivalently formulated as a semidefinite programming (SDP) feasibility problem, which can be efficiently solved in polynomial time~\cite{Lasserre2014}.
However, this technique relies on large-scale SDP, which limits the size of problems it can solve.
In order to reduce the computational complexity, Ahmadi and Majumdar~\cite{Ahmadi2019} recently introduced the concept of scaled diagonally dominant sum of squares (SDSOS) polynomials.
By restricting the Gram matrix in the SOS decomposition to be scaled diagonally dominant, SDSOS reformulates the SDP into a second-order cone programming (SOCP) problem~\cite{Kuang2019}, thereby improving the efficiency of solving large-scale problems.
The key efficiency gain arises from replacing the high-dimensional positive semidefinite constraint in SDP by a set of second-order cone constraints in SDSOS.
This substitution takes advantage of the fact that SOCP has similar computational complexity to linear programming~\cite{Ahmadi2019}, making it much faster to solve large-scale problems.

Very recently, building on this idea, a new notion of {\em first-order SDSOS-convex polynomials} was introduced by Chuong et al.~\cite{Chuong2019}, along with a tractable sufficient condition for convexity formulated via SOCP problems.
Remarkably, under certain assumptions, it has been shown that the first-order SDSOS-convex polynomial optimization problem admits finite convergence at the first level of the SOCP hierarchy, from which a global solution can be directly extracted.

{\small
\begin{figure}[ht]
	\centering
	\begin{tikzpicture}[>=latex, node distance=2.3cm, on grid]
		\usetikzlibrary{fit}
		
		\tikzstyle{concept} = [rectangle, draw, rounded corners,
		minimum width=6cm,
		minimum height=1.2cm, 
		text width=6cm, 
		align=center]
		\tikzstyle{highlight} = [concept, fill=blue!10]
		
		\node[concept] (sos) {SOS polynomial};
		\node[concept, below=of sos] (sos-conv) {SOS-convex polynomial \cite{Helton2010}};
		\node[concept, below=of sos-conv] (sos-salg) {SOS-convex semi-algebraic \\ function \cite{Chieu2018}};
		
		\node[concept, right=7cm of sos] (sdsos) {SDSOS polynomial \cite{Ahmadi2019}};
		\node[concept, below=of sdsos] (sdsos-conv) {\parbox{5.5cm}{\centering First-order SDSOS-convex \\ polynomial \cite{Chuong2019}}}; 
		\node[highlight, below=of sdsos-conv] (sdsos-salg) {First-order SDSOS-convex \\ semi-algebraic function};
		
		\draw[->] (sos) -- (sos-conv);
		\draw[->] (sos-conv) -- (sos-salg);
		\draw[->] (sdsos) -- (sdsos-conv);
		\draw[->] (sdsos-conv) -- (sdsos-salg);
		
		\draw[<->, dashed] (sos) -- (sdsos);
		\draw[<->, dashed] (sos-conv) -- (sdsos-conv);
		\draw[<->, dashed] (sos-salg) -- (sdsos-salg);
		
		\node[draw, dashed, rounded corners, inner sep=10pt, fit=(sos) (sdsos) (sos-conv) (sdsos-conv), label={[xshift=-0.5cm]left:\textrm{smooth}}] (smooth-region) {};
		
		\node[draw, dashed, rounded corners, inner sep=10pt, fit=(sos-salg) (sdsos-salg), label={[xshift=-0.5cm]left:\textrm{nonsmooth}}] (nonsmooth-region) {};
		\end{tikzpicture}
		\caption{Motivation of this work.}
		\label{fig:concept-lines}
\end{figure}
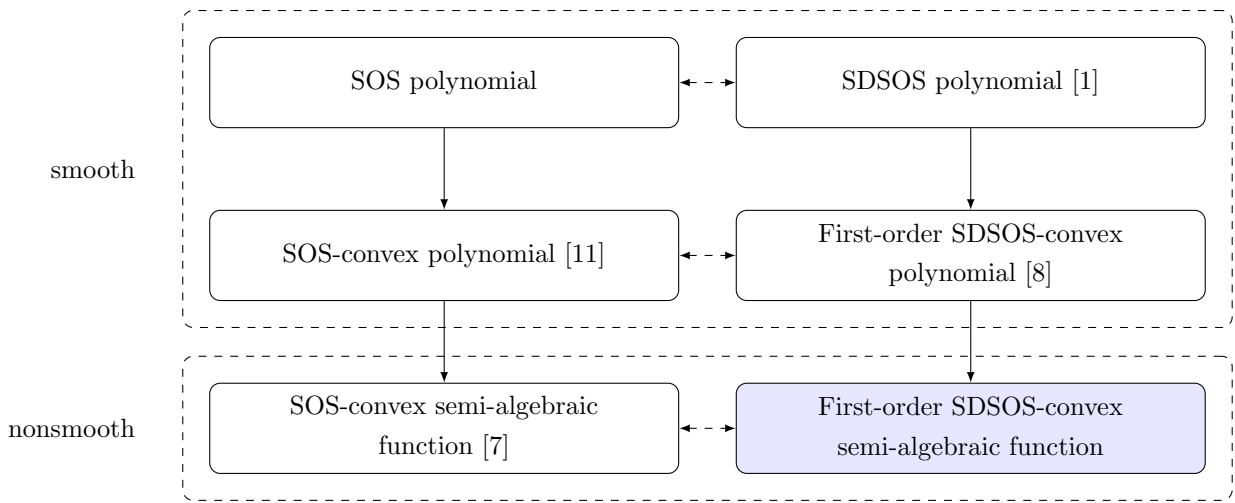}
\subsection{Motivations}
Although SOS-convex (resp., first-order SDSOS-convex) polynomials provide a verifiable framework for convex optimization via SDP (resp., SOCP) problems, their applicability remains limited to polynomial functions.
However, in practical optimization problems, non-polynomial and even nonsmooth structures are commonly encountered.
To overcome this limitation, researchers turned to the concept of semi-algebraic functions\footnote{A function $f: \mathbb{R}^{n} \rightarrow \mathbb{R}$ is said to be semi-algebraic if its graph $\left\{(\bx, y) \in \mathbb{R}^{n + 1} \colon y = f(\bx)\right\}$ forms a semi-algebraic set.}; see, e.g.,~\cite{HaHV2017,Lasserre2014}.
Motivated by which, a new class of nonsmooth convex functions, called {\em SOS-convex semi-algebraic functions}, has been proposed by Chieu et al.~\cite{Chieu2018}; moreover, this class of functions generalize the concept of SOS-convex polynomials.
It is also shown that, under commonly used strict feasibility conditions, optimization problems involving these functions can attain their optimal value and solutions by solving a single SDP program.
	
In this paper, motivated by \cite{Chieu2018,Chuong2019}, we aim to introduce and study a new class of {\em nonsmooth} convex functions, and we call it {\em first-order SDSOS-convex semi-algebraic function}; see {\sc Figure}~\ref{fig:concept-lines} for the motivation, and Section~\ref{sect:3} for its formal definition.

\subsection{Contributions}
Besides the introduced first-order SDSOS-convex semi-algebraic function, our major contributions are as follows:
\begin{itemize}
\item Exact SOCP relaxations for optimization problems with first-order SDSOS-convex semi-algebraic functions are studied. Namely, under suitable assumptions, the optimal value and optimal solutions can be obtained by solving an associated SOCP problem.
\item As applications, we show that, under some conditions, robust first-order SDSOS-convex polynomial optimization problems with SDD-restricted spectrahedron uncertainty set aslo enjoy exact SOCP relaxations.
\end{itemize}

The outline of the paper is organized as follows.
Section~\ref{sect:2} presents some notation and preliminaries.
Main results with some applications are proposed in sections~\ref{sect:3}, \ref{sect:4} and \ref{sect:5}.
Conclusions are given in Sect.~\ref{sect:6}.

\section{Notation and preliminaries}\label{sect:2}
Let us recall some notation and preliminary results that will be frequently used throughout this paper.
Let $\mathbb{R}^{n}$ denote the $n$-dimensional Euclidean space with the inner product $\langle\cdot, \cdot\rangle$ and the associated Euclidean norm $\|\cdot\|.$
We suppose $1 \leq n \in \mathbb{N},$ where $\mathbb{N}$ stands for the set of nonnegative integers.
The non-negative orthant of $\mathbb{R}^{n}$ is denoted by $\mathbb{R}_{+}^{n}:=\left\{\bx \in \mathbb{R}^n \colon x_i \geq 0,\ i = 1, \ldots, n\right\}.$
	
For an extended real-valued function $f$ on $\mathbb{R}^{n},$ $f$ is said to be {\em proper} if for all $\bx\in \mathbb{R}^{n},$ $f(\bx) > -\infty$ and there exists $\widehat{\bx} \in \mathbb{R}^{n}$ such that $f(\widehat{\bx}) \in \mathbb{R}.$
We denote its domain and epigraph of $f$ by ${\rm dom\,} f:=\{\bx\in \mathbb{R}^{n}\colon f(\bx) < +\infty\}$ and ${\rm epi\,}f:=\{(\bx,r)\in \mathbb{R}^{n}\times\mathbb{R} \colon f(\bx) \leq r\},$ respectively.
A function $f\colon\mathbb{R}^{n} \to \mathbb{R} \cup \{+\infty\}$ is said to be {\em convex} if
${\rm epi\,}f$ is a convex set in $\mathbb{R}^n \times \mathbb{R}.$
As usual, for any proper convex function $f$ on $\mathbb{R}^{n},$ its conjugate function $f^*: \mathbb{R}^{n} \to \mathbb{R}\cup \{+\infty\}$ is defined by $f^*(\bx^*) = \sup \left\{\langle \bx^*,\bx\rangle-f(\bx)\colon \bx\in\mathbb{R}^{n}\right\}$ for all $\bx^*\in\mathbb{R}^{n}.$

For a given set $A \subset \mathbb{R}^{n},$ we denote the closure and the convex hull generated by $A$ by ${\rm cl\,}A$ and ${\rm co\,}A,$ respectively.
The indicator function $\delta_A: \mathbb{R}^{n} \rightarrow \mathbb{R} \cup\{+\infty\}$ is defined by
	\begin{align*}
		\delta_A(\bx):=\left\{
		\begin{array}{@{\,}ll}
			0, & \bx\in A,\\
			+\infty, & {\rm otherwise}.
		\end{array}
		\right.
	\end{align*}
Note that, if $A$ is convex, then so is the indicator function $\delta_A.$

In what follows, we recall some results from real polynomials and SDP problems.
The space of all real polynomials in the variable $\bx$ is denoted by $\mathbb{R}[\bx];$
moreover, the space of all real polynomials in the variable $\bx$ with degree at most $d$ is denoted by $\mathbb{R}[\bx]_{d},$ where the degree of a polynomial $f$ is denoted by $\deg f.$
We say that a real polynomial $f$ is {\em sum of squares} (SOS in short), if there exist real polynomials $q_{\ell},$ $\ell = 1, \ldots, s$ such that $f =\sum_{\ell=1}^{s}q_{\ell}^{2}.$
The set of all sums of squares of real polynomials of degree at most $2d$ in the variable $\bx$ is denoted by $\Sigma[\bx]_{2d}.$
For a multi-index $\ba\in\mathbb{N}^{n},$ let $|\ba|:=\sum_{i=1}^n\alpha_{i},$ and let $\mathbb{N}^n_{d}:= \{\ba \in \mathbb{N}^n \colon  |\ba|\leq d\},$
we denote by $\bx^\ba$ the monomial $x_{1}^{\alpha_{1}}\cdots x_{n}^{\alpha_n}.$
The canonical basis of $\mathbb{R}[\bx]_{d}$ is denoted by
\begin{align}\label{cano_basis}
\lceil \bx\rceil_d:=(\bx^\ba)_{\ba\in\mathbb{N}^n_{d}}=(1,x_{1},\ldots,x_{n},x_{1}^{2},x_{1}x_{2},\ldots,x_{n}^{2},\ldots,x_{1}^{d},\ldots,x_{n}^{d})^{T},
\end{align}
which has dimension $s(n,d):=\left( \substack{ n+d \\ n }\right).$
Given an $s(n, 2d)$-vector $\by:= (y_\ba)_{\ba\in\N^n_{2d}}$ with $y_{\bze}=1,$ let $\M_{d}(\by)$ be the moment matrix of dimension $s(n, d),$ with rows and columns labeled by \eqref{cano_basis} in the sense that
\begin{align*}
\M_d(\by)(\ba,\bbe)=y_{\ba+\bbe}, \ \forall \ba,\bbe\in\N^n_d.
\end{align*}
In other words, $\M_d(\by)=\sum_{\ba\in\N^n_{2d}}y_\ba (\lceil \bx\rceil_d\lceil \bx\rceil_d^T)_\ba.$ The gradient and the Hessian of a polynomial $f\in \mathbb{R}[\bx]$ at a point $\overline \bx$ are denoted by $\nabla f(\overline \bx)$ and $\nabla^{2}f(\overline \bx),$ respectively.
	
Let $S^{n}$ be the set of $n\times n$ symmetric matrices.
The notion ``$\succeq$" denotes the L\"{o}wner partial order of $S^{n},$ that is, for $X,Y\in S^{n},$ $X\succeq Y$ if and only if $X-Y$ is a positive semidefinite matrix.
Let $S_{+}^{n}$ be the set of $n\times n$ symmetric positive semidefinite matrices.
Also, $X \succ 0$ means that $X$ is a positive definite matrix.
For $X, Y\in S^{n},$ $\langle X, Y\rangle := {\rm tr}(XY),$ where ``${\rm tr}$'' denotes the trace of a matrix.
	
Recall that a symmetric matrix $A=\left(a_{i j}\right)$ is called diagonally dominant ($dd$ in short) if
	\begin{align*}
		a_{i i} \geq \sum_{j \neq i}\left|a_{i j}\right| \ \text { for all } \ i.
	\end{align*}
If in addition the inequality is strict for every  $i ,$ i.e.,
	\begin{align*}
		a_{i i}>\sum_{j \neq i}\left|a_{i j}\right| \ \text { for all } \ i,
	\end{align*}
then $A$ is said to be strictly diagonally dominant.
A symmetric matrix $A$ is called scaled diagonally dominant ($sdd$ in short) if there exists a diagonal matrix $D,$ with positive diagonal entries, such that $DAD$ is diagonally dominant.
The set of $n\times n$ $dd$ and $sdd$ matrices are denoted by $S_{dd}^n$ and $S_{sdd}^n,$ respectively.
	
\medskip
The following concept was introduced by Helton and Nie~\cite{Helton2010} (see also \cite{Ahmadi2012,Ahmadi2013}).
\begin{definition}[SOS-convex polynomial] 
{\rm A real polynomial $f$ on $\mathbb{R}^{n}$ is called {\it SOS-convex} if the polynomial $F: \ (\bx, \by) \mapsto f(\bx) - f(\by) - \nabla f(\by)^{T}(\bx - \by)$ is an SOS polynomial on $\mathbb{R}^{n} \times \mathbb{R}^{n}.$
}\end{definition}
Recently, Chieu et al.~\cite{Chieu2018} introduced a new concept called SOS-convex semi-algebraic function, which is a class of {\em nonsmooth} convex functions, and it includes SOS-convex polynomial as a special case.
	
\begin{definition}[SOS-convex semi-algebraic function]\label{sos-convex-sa}
{\rm A real function $f: \mathbb{R}^{n} \rightarrow \mathbb{R}$ is said to be an {\it SOS-convex semi-algebraic function }if it admits a representation
\begin{align*}
f(\bx) = \max_{\by \in \Omega}\left\{h_{0} (\bx) + \sum_{j=1}^{s} y_{j} h_{j} (\bx) \right\},
\end{align*}
where
\begin{itemize}
\item[{\rm (i)}] each $h_{j},$ $j = 0,1, \ldots, s,$ is a polynomial and for each $\by \in \Omega,$  $h_{0} + \sum_{j=1}^{s} y_{j} h_{j}$ is an SOS-convex polynomial on $\mathbb{R}^{n};$
\item[{\rm (ii)}] $\Omega$ is a nonempty compact semidefinite program representable set given by
\begin{align*}
\Omega := \left\{\by \in \mathbb{R}^{s} \colon \exists \bz \in \mathbb{R}^{p} \text { such that } A_{0}+\sum_{j=1}^{s} y_{j} A_{j}+\sum_{l=1}^{p} z_{\ell} B_{\ell} \in S_+^q\right\},
\end{align*}
for some $p \in \mathbb{N}, A_{j},$ $j = 0,1, \ldots, s,$ and $B_{\ell},\ \ell=1, \ldots, p,$ being $(q\times q)$-symmetric matrices with some $q \in \mathbb{N}.$
\end{itemize}}
\end{definition}
	
To reduce the computational burden of SOS-based relaxations, Ahmadi and Majumdar~\cite{Ahmadi2019} proposed a novel concept called SDSOS polynomial, which can be viewed as a more tractable SOCP alternative.
\begin{definition}[SDSOS polynomial]
{\rm A real polynomial $f$ with degree $2d$ on $\mathbb{R}^n$ is called a {\em scaled diagonally dominant sum of squares {\rm (SDSOS in short)}} if it can be written as
\begin{align*}
f(\bx) = \sum_{i=1}^{p} \alpha_im^2_i(\bx) + \sum_{i,j=1}^p(\beta^+_{ij}m_i(\bx) + \gamma^+_{ij}m_j(\bx))^2 +\sum_{i,j=1}^p(\beta^-_{ij}m_i(\bx)-\gamma^-_{ij}m_j(\bx))^2
\end{align*}
for some monomials $m_{i}(\bx),$ $m_{j}(\bx)$ and some scalars $\alpha_i,\beta^+_{ij},\gamma^+_{ij},\beta^-_{ij},\gamma^-_{ij}$ with $\alpha_i \geq 0.$
We denote the set of SDSOS polynomials in $\bx$ with degree $2d$ by $\widetilde\Sigma[\bx]_{2d}.$
}\end{definition}

Observe that any SDSOS polynomial is SOS, but the converse may not be true in general; see, e.g.,~\cite{Ahmadi2019}.
An important feature of SDSOS polynomials is that the SDSOS property of a given real polynomial can be verified by solving a single SOCP problem~\cite{Ahmadi2019}.
The following proposition provides a reformulation of SDSOS polynomials as a conic programming problem, in particular, a second-order cone program.
\begin{proposition}\label{proposition1}{\rm \cite{Ahmadi2019}}
A polynomial $f\in\mathbb{R}[\bx]_{2d}$ has a scaled diagonally dominant sums-of-squares decomposition if and only if there exists $Q\in S^{s(n,d)}_{sdd}$ such that $f(\bx)=\langle \lceil \bx\rceil_d\lceil \bx\rceil_d^{T},Q\rangle$ for all $\bx\in\mathbb{R}^{n}.$
\end{proposition}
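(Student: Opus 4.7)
The plan is to reduce both directions of the equivalence to a single structural characterization of $S^{n}_{sdd}$ matrices: a symmetric matrix $A$ belongs to $S^{n}_{sdd}$ if and only if it can be written as a sum of PSD matrices, each supported on at most one $2\times 2$ principal submatrix (equivalently, each is a nonnegative multiple of either $e_i e_i^T$ or of a rank-one matrix of the form $(\beta e_i \pm \gamma e_j)(\beta e_i \pm \gamma e_j)^T$). This characterization is standard (see \cite{Ahmadi2019}), and once available, both implications reduce to transparent bookkeeping between scalar squared terms in $\bx$ and matrix rank-one blocks indexed by the canonical monomial basis $\lceil \bx\rceil_d$.

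For the forward direction, I would take an SDSOS decomposition
\begin{align*}
f(\bx) = \sum_{i=1}^{p} \alpha_i m_i^{2}(\bx) + \sum_{i,j=1}^{p}\bigl(\beta^{+}_{ij} m_i(\bx) + \gamma^{+}_{ij} m_j(\bx)\bigr)^{2} + \sum_{i,j=1}^{p}\bigl(\beta^{-}_{ij} m_i(\bx) - \gamma^{-}_{ij} m_j(\bx)\bigr)^{2}
\end{align*}
and observe that each monomial $m_i(\bx)$ equals $\lceil \bx\rceil_d^{T} e_{k_i}$ for the appropriate basis index $k_i$. Hence every squared summand can be rewritten as $\langle \lceil \bx\rceil_d \lceil \bx\rceil_d^{T}, M\rangle$ where $M$ is a PSD matrix supported on a single diagonal entry or on a single $2\times 2$ principal block with prescribed sign of the off-diagonal entry. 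Summing these matrices produces a Gram matrix $Q$ that realizes $f(\bx)=\langle \lceil \bx\rceil_d\lceil \bx\rceil_d^{T},Q\rangle$, and by the structural characterization $Q\in S^{s(n,d)}_{sdd}$.

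For the reverse direction, suppose $f(\bx)=\langle \lceil \bx\rceil_d\lceil \bx\rceil_d^{T},Q\rangle$ with $Q\in S^{s(n,d)}_{sdd}$. I would invoke the characterization to decompose $Q=\sum_{k} M_k$ where each $M_k$ is PSD and has at most one nonzero $2\times 2$ principal block. A pure diagonal block $M_k=\alpha\, e_i e_i^{T}$ contributes $\alpha\, m_i^{2}(\bx)$ with $\alpha\geq 0$. A genuine $2\times 2$ block can be spectrally decomposed into at most two rank-one PSD summands $v v^{T}$ with $v\in\mathbb{R}^{2}$; depending on whether the off-diagonal entry of the block is nonnegative or nonpositive, each such summand contributes a term of the form $(\beta m_i + \gamma m_j)^{2}$ or $(\beta m_i - \gamma m_j)^{2}$. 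Reassembling these scalar squares produces an SDSOS decomposition of $f$.

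The main obstacle is the structural characterization of $S^{n}_{sdd}$ in terms of elementary $2\times 2$ PSD blocks; I would either cite it from \cite{Ahmadi2019} or justify it briefly by observing that if $DAD$ is diagonally dominant with $D\succ 0$ diagonal, then the standard decomposition of a diagonally dominant matrix as a sum of nonnegative diagonal terms and $2\times 2$ PSD pieces $\begin{pmatrix}|a_{ij}| & \pm a_{ij} \\ \pm a_{ij} & |a_{ij}|\end{pmatrix}$ (padded with zeros) can be rescaled by $D^{-1}$ on both sides to yield the corresponding decomposition of $A$. Beyond this, only careful indexing of monomials against basis vectors is required, which is routine.
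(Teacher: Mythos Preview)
The paper does not supply its own proof of this proposition; it is stated with a citation to \cite{Ahmadi2019} and followed by a remark recording the key structural fact (that any $Q\in S^{s(n,d)}_{sdd}$ decomposes as a sum of PSD matrices supported on single $2\times 2$ principal blocks, citing \cite[Lemma~3.8 and Theorem~3.9]{Ahmadi2019}). Your proposal is correct and is precisely the argument of \cite{Ahmadi2019}: the forward direction packages each SDSOS summand as a rank-one $2\times 2$ block in the Gram matrix, and the reverse direction invokes the $2\times 2$-block decomposition of an $sdd$ matrix to read off squared binomials. The brief justification you sketch for the structural characterization (decompose $DAD$ as a $dd$ matrix into diagonal and $2\times 2$ pieces, then conjugate back by $D^{-1}$) handles one implication; the other implication (a sum of $2\times 2$ PSD blocks is $sdd$) is the substance of \cite[Lemma~3.8]{Ahmadi2019}, so citing it as you do is appropriate.
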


\begin{remark}{\rm
It is worth mentioning that, according to~\cite[Lemma~3.8 and Theorem~3.9]{Ahmadi2019}, a matrix $Q\in S^{{s(n,d)}}_{sdd}$ can be decomposed as
\begin{align*}
Q = \sum_{1\leq i<j\leq s(n,d)}M^{ij},
\end{align*}
where each matrix $M^{ij}$ has all entries zero except for the four positions $(i,i), (i,j), (j,i), (j,j),$ which form a symmetric $2 \times 2$ matrix:
\begin{align*}
\left(
		\begin{array}{cc}
			M^{ij}_{ii} & M^{ij}_{ij} \\
			M^{ij}_{ji} & M^{ij}_{jj} \\
		\end{array}
\right)\succeq0,
\end{align*}
which is equivalent to the second-order cone constraint:
\begin{align*}
		\left\|\left(
		\begin{array}{cc}
			2M^{ij}_{ij} \\
			M^{ij}_{ii} - M^{ij}_{jj} \\
		\end{array}
		\right)\right\|\leq M^{ij}_{ii}+M^{ij}_{jj}.
\end{align*}
}\end{remark}

Later on, motivated by~\cite{Ahmadi2019}, Chuong et al.~\cite{Chuong2019} introduced the following class of polynomials, which form a subclass of SOS-convex polynomials.
\begin{definition}[first-order SDSOS-convex polynomial]
{\rm A real polynomial $f$ on $\mathbb{R}^n$ is called {\em first-order SDSOS-convex} if $h_f$ is an SDSOS polynomial in the variables  $\bx$ and $\by,$ where $h_f$ is a polynomial defined by $h_f(\bx, \by) := f(\bx)-f(\by)- \langle \nabla f(\by), \bx-\by\rangle.$
}
\end{definition}
	
It has been shown that the first-order SDSOS-convexity is a sufficient condition for SOS-convex polynomials, however, the converse may not be true; see, e.g., \cite[Example~5.4]{Chuong2019}.
Moreover, checking whether a given polynomial $f$ is first-order SDSOS-convex or not can be done by solving a second-order cone programming problem.
These polynomials exhibit several notable properties; see, e.g., the next Lemmas~\ref{lemma4} and \ref{lemma5}.
\begin{lemma}\label{lemma4} {\rm \cite[Proposition~5.3]{Chuong2019}}
Let $f$ be a first-order SDSOS-convex polynomial.
Then $f(\bx)\geq0$ for all $\bx\in\mathbb{R}^n$ if and only if $f$ is SDSOS.
\end{lemma}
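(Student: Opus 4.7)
The ``if'' direction is immediate: every SDSOS polynomial is a sum of squares, and is therefore nonnegative on $\mathbb{R}^n$. The substance of the lemma lies in the ``only if'' direction, which I would prove by evaluating the SDSOS certificate of $h_f$ at a minimizer of $f$.

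Assume $f \geq 0$ on $\mathbb{R}^n$ and that $f$ is first-order SDSOS-convex. Since every SDSOS polynomial is SOS, the polynomial $h_f$ is SOS, so $f$ is SOS-convex and in particular convex. My first step is to show that $f$ attains its infimum: a convex polynomial bounded below on $\mathbb{R}^n$ admits a global minimizer (a classical fact in the Belousov--Klatte circle of results, which genuinely exploits the polynomial structure). Thus there exists $\bar{\bx} \in \mathbb{R}^n$ with $\nabla f(\bar{\bx}) = 0$ and $f(\bar{\bx}) = \min_{\bx \in \mathbb{R}^n} f(\bx) \geq 0$.

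With such a $\bar{\bx}$ fixed, the definition of $h_f$ gives $h_f(\bx, \bar{\bx}) = f(\bx) - f(\bar{\bx})$. The key observation is that the SDSOS cone is closed under the specialization $\by \mapsto \bar{\bx}$: in the decomposition
\[
h_f(\bx, \by) = \sum_{i} \alpha_i\, m_i(\bx, \by)^2 + \sum_{i,j}\bigl(\beta^+_{ij} m_i(\bx,\by) + \gamma^+_{ij} m_j(\bx,\by)\bigr)^2 + \sum_{i,j}\bigl(\beta^-_{ij} m_i(\bx,\by) - \gamma^-_{ij} m_j(\bx,\by)\bigr)^2,
\]
each monomial $m_i(\bx, \by) = \bx^{\ba}\by^{\bbe}$ specializes to $\bar{\bx}^{\bbe}\bx^{\ba}$, that is, a constant times a monomial in $\bx$ alone; absorbing these constants into the coefficients $\alpha_i,\beta^{\pm}_{ij},\gamma^{\pm}_{ij}$ yields an SDSOS decomposition of $f(\bx) - f(\bar{\bx})$ in $\bx$. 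Since the nonnegative constant $f(\bar{\bx})$ is trivially SDSOS (as $f(\bar{\bx}) = (\sqrt{f(\bar{\bx})})^2$), adding it back gives $f \in \widetilde{\Sigma}[\bx]_{2d}$.

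The main obstacle I anticipate is the attainment of the minimum: a general convex function bounded below need not achieve its infimum (e.g., $e^{-x}$ on $\mathbb{R}$), so the argument must rely essentially on $f$ being a polynomial rather than merely an SOS-convex function. The two remaining ingredients---stability of the SDSOS property under the substitution $\by \mapsto \bar{\bx}$ and closure of SDSOS under addition of nonnegative constants---are routine bookkeeping on the explicit SDSOS decomposition.
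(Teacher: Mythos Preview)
Your argument is correct. The paper does not supply its own proof of this lemma; it simply quotes the result from \cite[Proposition~5.3]{Chuong2019}, so there is no in-paper proof to compare against. That said, your route---specialize the SDSOS certificate of $h_f$ at a global minimizer $\bar{\bx}$ of $f$, then add back the nonnegative constant $f(\bar{\bx})$---is exactly the standard mechanism behind such results and is what one finds in the cited source. The two nontrivial points you flagged are handled correctly: (i) substitution $\by\mapsto\bar{\bx}$ turns each monomial $m_i(\bx,\by)$ into a scalar multiple of a monomial in $\bx$, so each term $\alpha_i m_i^2$ and $(\beta^{\pm}_{ij}m_i\pm\gamma^{\pm}_{ij}m_j)^2$ remains of SDSOS type (the sign constraint $\alpha_i\ge0$ survives because the scalar enters squared); and (ii) the existence of a minimizer genuinely requires the Belousov--Klatte/Frank--Wolfe attainment result for convex polynomials, not merely convexity. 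Nothing is missing.
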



\begin{lemma}\label{lemma5} {\rm \cite[Proposition~6.1]{Chuong2019}}
Let $f$ be a first-order SDSOS-convex polynomial on $\mathbb{R}^n$ with degree $2d,$ and let $\by=(y_{\ba})_{\ba\in\mathbb{N}_{2d}^n}$ satisfy $y_{\bze}=1$ and
\begin{align*}
		\left\|\begin{array}{c}
			2(\M_{d}(\by))_{ij}\\
			(\M_{d}(\by))_{ii}-(\M_{d}(\by))_{jj} \\
		\end{array}\right\|\leq (\M_{d}(\by))_{ii}+(\M_{d}(\by))_{jj}, \ \ 1\leq i, j\leq s(n,d).
\end{align*}
Let $L_{\by}: \mathbb{R}[\bx]\to\mathbb{R}$ be a linear functional defined by $L_{\by}(f):=\sum\limits_{\ba\in\mathbb{N}_{2d}^n} f_{\ba} y_{\ba},$
where $f(\bx)=\sum\limits_{\ba\in\mathbb{N}_{2d}^n} f_{\ba} \bx^{\ba}.$
Then
\begin{align*}
L_{\by}(f)\geq f(L_{\by}(\bx)),
\end{align*}
where $L_{\by}(\bx):=(L_{\by}(x_{1}),\ldots,L_{\by}(x_{n})).$
\end{lemma}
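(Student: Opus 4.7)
The plan is to reduce the desired inequality to a nonnegativity statement $L_\by(g) \geq 0$ for a carefully chosen auxiliary polynomial $g$, and then exploit the SDSOS certificate of $g$ together with the second-order cone constraints that the hypothesis imposes on the moment matrix $\M_d(\by)$.

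First I would set $\overline{\bx} := L_\by(\bx) \in \R^n$ and introduce
\[
g(\bx) := f(\bx) - f(\overline{\bx}) - \langle \nabla f(\overline{\bx}), \bx - \overline{\bx} \rangle,
\]
which is $f$ minus an affine function of $\bx$. A direct computation shows that $h_g = h_f$ (the affine correction cancels), so $g$ inherits first-order SDSOS-convexity from $f$; moreover, since $h_f \geq 0$ makes $f$ convex, one also has $g \geq 0$ on $\R^n$. Lemma~\ref{lemma4} then yields that $g$ is SDSOS, and since $\deg g \leq 2d$, Proposition~\ref{proposition1} supplies a matrix $Q \in S^{s(n,d)}_{sdd}$ with $g(\bx) = \langle \lceil \bx \rceil_d \lceil \bx \rceil_d^T, Q \rangle$ for all $\bx \in \R^n$.

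Next, applying the linear functional $L_\by$ coordinate-wise to this identity and swapping it with the inner product gives
\[
L_\by(g) = \langle \M_d(\by), Q \rangle,
\]
since $L_\by(\lceil \bx \rceil_d \lceil \bx \rceil_d^T) = \M_d(\by)$ by definition of the moment matrix. The main obstacle will be showing that this trace is nonnegative without being able to assert $\M_d(\by) \succeq 0$ globally. I would handle this via the SDD decomposition $Q = \sum_{1 \leq i < j \leq s(n,d)} M^{ij}$ recalled in the Remark: each summand $\langle \M_d(\by), M^{ij} \rangle$ reduces to the Frobenius pairing of two $2 \times 2$ symmetric matrices, namely the principal submatrix of $\M_d(\by)$ indexed by $\{i,j\}$ and the nontrivial PSD block of $M^{ij}$. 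The hypothesis on $\by$ is precisely the SOC encoding of positive semidefiniteness of every such $2 \times 2$ principal submatrix of $\M_d(\by)$, so each summand is a pairing of two PSD $2\times 2$ matrices and is nonnegative; summing over $i<j$ yields $L_\by(g) \geq 0$.

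Finally, using linearity of $L_\by$ together with $L_\by(1) = y_\bze = 1$ and $L_\by(\bx) = \overline{\bx}$,
\[
L_\by(g) = L_\by(f) - f(\overline{\bx}) - \langle \nabla f(\overline{\bx}), L_\by(\bx) - \overline{\bx} \rangle = L_\by(f) - f(\overline{\bx}),
\]
so the desired inequality $L_\by(f) \geq f(\overline{\bx}) = f(L_\by(\bx))$ follows at once. The only delicate point is really the bilinear pairing step: one must exploit the pairwise $2 \times 2$ PSD structure built into both the SDD decomposition of $Q$ and the hypothesis on $\M_d(\by)$, rather than any global positive semidefiniteness of either matrix.
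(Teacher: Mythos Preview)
The paper does not supply its own proof of this lemma; it merely cites \cite[Proposition~6.1]{Chuong2019}. Your argument is correct and is in fact the natural proof: specialize $h_f(\cdot,\overline\bx)$ to obtain a nonnegative first-order SDSOS-convex polynomial $g$, invoke Lemma~\ref{lemma4} and Proposition~\ref{proposition1} to get an $sdd$ Gram matrix $Q$, and then observe that the hypothesis on $\by$ is precisely the statement that every $2\times 2$ principal submatrix of $\M_d(\by)$ is positive semidefinite, so that the pairing $\langle \M_d(\by),Q\rangle$ decomposes, via the $sdd$ block decomposition of $Q$, into a sum of Frobenius inner products of PSD $2\times 2$ matrices. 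This is exactly the mechanism one expects from the cited reference, and your identification of the ``delicate point'' (that neither $\M_d(\by)$ nor $Q$ need be globally PSD, only their matched $2\times 2$ pieces) is the essential content of the lemma.
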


\section{First-order SDSOS-convex semi-algebraic functions}\label{sect:3}
In this section, we introduce a new class of functions called first-order SDSOS-convex semi-algebraic functions (see the {\sc Figure}~\ref{fig:concept-lines} as shown in Section~\ref{sect:1} for the motivation).
The class of first-order SDSOS-convex semi-algebraic functions is a subclass of locally Lipschitz {\em nonsmooth} convex functions, and it includes first-order SDSOS-convex polynomial as a special case.

\begin{definition}[first-order SDSOS-convex semi-algebraic function]\label{SOS-SA}{\rm
A real function $f$ is called {\em first-order SDSOS-convex semi-algebraic} on $\mathbb{R}^{n}$ if it admits a representation
\begin{align*}
f(\bx) := \sup_{\by\in\Omega}\bigg\{h_{0}(\bx) +\sum^s_{j=1}y_jh_j(\bx)\bigg\},
\end{align*}
where
\begin{itemize}
\item[{\rm (i)}] each $h_j,$ $j = 0, 1,\ldots,s,$ is a polynomial, and for each $\by\in\Omega,$ $h_{0}+\sum^{s}_{j=1}y_jh_j$ is a first-order SDSOS-convex polynomial on $\mathbb{R}^{n};$
\item[{\rm (ii)}] $\Omega$ is a nonempty compact second-order cone program representable set given by
\begin{align*}
\Omega:= \left\{\by\in \mathbb{R}^s \colon \exists\, \bz\in \mathbb{R}^{p} \textrm{ s.t. } A_{0} +\sum^{s}_{j=1}y_{j}A_{j} +\sum^{p}_{\ell=1}z_{\ell}B_{\ell} \in S_{ sdd}^{q}\right\},
\end{align*}
with $A_{j},$ $j=0,1,\ldots,s,$ and $B_{\ell},$ $\ell=1,\ldots,p,$ being $q\times q$ symmetric matrices.
\end{itemize}
}
\end{definition}
	
\begin{remark}{\rm
\begin{enumerate}[\upshape (i)]
\item The main difference between Definition~\ref{SOS-SA} and Definition~\ref{sos-convex-sa} is that the set $\Omega$ in Definition~\ref{sos-convex-sa} is a nonempty compact {\em semidefinite program} representable set, while it is a nonempty compact {\em second-order cone program} representable set in Definition~\ref{SOS-SA}.
\item A first-order SDSOS-convex semi-algebraic function is an SOS-convex semi-algebraic function, but the converse is not true, as shown in the next Example~\ref{example-1}.
\end{enumerate}
}\end{remark}
\begin{example}\label{example-1}{\rm
Let  $f(\bx)=\sup\limits_{y \in \left[0,1 \right]}\left\{h_{0}(\bx) + y h_{1}(\bx)\right\}, \bx \in  \mathbb{R}^2,$ where $h_{0}(\bx)=x_{1}^{2}+2x_{2}^{2},\ h_{1}(\bx)=2x_{1}x_{2}.$
\begin{itemize}
\item  $f(\bx)$ is an SOS-convex semi-algebraic function.
To show this, we notice that
\begin{align*}
\Omega := & \left\{y \in \mathbb{R}\colon A =
\left(\begin{array}{ll}
0 & 0 \\
0 & 1
\end{array}\right)
+ y
\left(\begin{array}{ll}
1 & 0 \\
0 & -1
\end{array}\right) =
\begin{pmatrix}
y & 0 \\
0 & 1-y
\end{pmatrix}
\in S_{sdd}^{2}\right\} \\
= & \left\{y \in \mathbb{R} \colon \exists \, D= {\rm diag}\left ( d_1,d_2 \right ) \textrm{ s.t. } DAD \in S_{dd}^{2}, d_{i}>0,i=1,2\right\} \\
= & \left\{y \in \mathbb{R} \colon \exists \, d_{i}>0,i=1,2, \begin{pmatrix}
							d_{1}^{2}y & 0 \\
							0 & d_{2}^{2}(1-y)
						\end{pmatrix}  \in S_{dd}^{2} \right\} \\
= & \left\{y \in \mathbb{R} \colon 0 \leq y \leq 1 \right\}.
\end{align*}
Thus $\Omega=\left[0, 1\right]$ is a compact second-order cone program representable set and is also a compact semidefinite program representable set.
For any fixed $y \in \Omega,$ define $g_{y}(\bx):= h_{0}(\bx)+y h_{1}(\bx) = x_{1}^{2}+2x_{2}^{2}+2yx_{1}x_{2}.$
Then, we have
\begin{equation*}
\nabla g_{y}(\bx)=\binom{2x_{1}+2yx_{2}}{4x_{2}+2yx_{1}}; \quad
\nabla^{2}g_{y}(\bx)=\begin{pmatrix}
							2 & 2y \\
							2y & 4
						\end{pmatrix}.
\end{equation*}
Clearly, for each $y\in \Omega,$ $h_{0}(\bx) + y h_{1}(\bx)$ is an SOS-convex polynomial in variable $\bx$.
So, $f(\bx)$ is an SOS-convex semi-algebraic function.
\item  $f(\bx)$ is not a first-order SDSOS-convex semi-algebraic function.
To see this, we consider
\begin{align*}
h_g(\bx,\bz) &= g_{y}(\bx)-g_{y}(\bz)-\nabla g_{y}(\bz)^{T}(\bx-\bz)  \\
& = x_{1}^{2}+2x_{2}^{2}+z_{1}^{2}+2z_{2}^{2}+2yx_{1}x_{2}+2yz_{1}z_{2}-2yx_{1}z_{2}-2yx_{2}z_{1}-2x_{1}z_{1}-4x_{2}z_{2} \\
& = \begin{pmatrix}
							x_1 & x_2 & z_1 & z_2
						\end{pmatrix} \begin{pmatrix}
							1 & y & -1 & -y \\
							y & 2 & -y & -2 \\
							-1 & -y & 1 & y \\
							-y & -2 & y & 2
						\end{pmatrix}(:=Q) \begin{pmatrix}
							x_1 \\
							x_2 \\
							z_1\\
							z_2
						\end{pmatrix} .
\end{align*}
By contradiction, assume that $ h_g(\bx,\bz)$ is an SDSOS polynomial in variables $(\bx,\bz).$
Then there exists $D={\rm diag}\left (d_1, d_2, d_3, d_4 \right),\ d_{i} > 0,\ i = 1, 2, 3, 4, \textrm{ s.t. } DQD \in S_{dd}^{4}. $ By a simple calculation, we have
					\[
					\left\{
					\begin{array}{llll}
						d_1 \ \geq \ d_2y+d_3+d_4y,   \\
						2d_2 \ \geq \ d_1y+d_3y+2d_4,   \\
						d_3 \ \geq \ d_1+d_2y+d_4y,  \\
						2d_4 \ \geq \ d_1y+2d_2+d_3y,
					\end{array}
					\right. \ \Rightarrow \
					0 \geq d_1+d_2+d_3+d_4,
					\]
which contradicts the fact that $d_{i} > 0,$ for all $i = 1, 2, 3, 4.$
Thus $h_g(\bx, \bz)$ is not an SDSOS polynomial, so $f(\bx)$ is not a first-order SDSOS-convex semi-algebraic function.
\end{itemize}
}\end{example}
		
\begin{proposition}\label{proposition}
{\rm If $f_1$ and $f_2$ are first-order SDSOS-convex semi-algebraic functions on $\mathbb{R}^{n},$ then so is $f_1+f_2$.}
\end{proposition}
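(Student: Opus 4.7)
The plan is to mimic the standard proof that the sum of two SOS-convex semi-algebraic functions is again of the same type, replacing SDP-representability and SOS decomposition by their SOCP/SDSOS counterparts. Write
\[
f_1(\bx) = \sup_{\by\in\Omega_1}\Bigl\{h^{(1)}_{0}(\bx) + \sum_{j=1}^{s_1} y_{j}\,h^{(1)}_{j}(\bx)\Bigr\},\quad
f_2(\bx) = \sup_{\bw\in\Omega_2}\Bigl\{h^{(2)}_{0}(\bx) + \sum_{k=1}^{s_2} w_{k}\,h^{(2)}_{k}(\bx)\Bigr\},
\]
where $\Omega_i$ is SOCP-representable via matrices $A^{(i)}_j$, $B^{(i)}_\ell$ in $S^{q_i}$, and for each parameter point the inner polynomial is first-order SDSOS-convex. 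Since the two suprema are independent, I would immediately rewrite
\[
(f_1+f_2)(\bx) \;=\; \sup_{(\by,\bw)\in\Omega_1\times\Omega_2}\Bigl\{\bigl(h^{(1)}_{0}+h^{(2)}_{0}\bigr)(\bx) + \sum_{j=1}^{s_1}y_j h^{(1)}_j(\bx) + \sum_{k=1}^{s_2} w_k h^{(2)}_k(\bx)\Bigr\},
\]
which is of the form required by Definition~\ref{SOS-SA}, after relabeling the parameter vector as $(\by,\bw)\in\mathbb{R}^{s_1+s_2}$.

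Now I would verify the two clauses of the definition. For clause (i), fix $(\by,\bw)\in\Omega_1\times\Omega_2$ and let $g_1 := h^{(1)}_{0}+\sum_j y_jh^{(1)}_j$, $g_2 := h^{(2)}_{0}+\sum_k w_kh^{(2)}_k$; these are first-order SDSOS-convex polynomials by hypothesis. The associated polynomial $h_{g_1+g_2}(\bx,\bz)=(g_1+g_2)(\bx)-(g_1+g_2)(\bz)-\langle\nabla(g_1+g_2)(\bz),\bx-\bz\rangle$ splits as $h_{g_1}(\bx,\bz)+h_{g_2}(\bx,\bz)$, each summand being SDSOS in $(\bx,\bz)$. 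Hence I must observe that the SDSOS cone is closed under addition. This follows either directly from the definition (concatenate the squared-binomial decompositions) or, more slickly, from Proposition~\ref{proposition1} combined with the fact that $S^{N}_{sdd}$ is a convex cone: if $g_i=\langle \lceil(\bx,\bz)\rceil_{d_i}\lceil(\bx,\bz)\rceil_{d_i}^T,Q_i\rangle$ with $Q_i\in S^{N_i}_{sdd}$, then after embedding both Gram matrices into a common symmetric matrix (padded with zero rows/columns indexed by the missing monomials) the sum is still sdd. This gives $g_1+g_2$ first-order SDSOS-convex.

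For clause (ii), I would show that the Cartesian product $\Omega_1\times\Omega_2$ is SOCP-representable. The idea is block diagonalization: set
\[
\widetilde A_0:=\begin{pmatrix}A^{(1)}_0 & 0\\ 0 & A^{(2)}_0\end{pmatrix},\quad
\widetilde A_j:=\begin{pmatrix}A^{(1)}_j & 0\\ 0 & 0\end{pmatrix},\quad
\widetilde A_{s_1+k}:=\begin{pmatrix}0 & 0\\ 0 & A^{(2)}_k\end{pmatrix},
\]
and similarly place the $B^{(i)}_\ell$ blocks into a new list of $B$-matrices of size $q_1+q_2$. The key elementary fact to invoke is that a block-diagonal symmetric matrix $\mathrm{diag}(M_1,M_2)$ lies in $S^{q_1+q_2}_{sdd}$ if and only if $M_1\in S^{q_1}_{sdd}$ and $M_2\in S^{q_2}_{sdd}$; this follows by choosing the scaling diagonal matrix in block-diagonal form $\mathrm{diag}(D_1,D_2)$, since dd-ness is checked row by row and cross-block entries are zero. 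Consequently $\Omega_1\times\Omega_2$ coincides with the SOCP-representable set defined by $\widetilde A_0+\sum_j y_j\widetilde A_j+\sum_k w_k\widetilde A_{s_1+k}+\sum_\ell (\cdots)\in S^{q_1+q_2}_{sdd}$. Nonemptiness is obvious, and compactness of a product of compact sets is standard.

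The main obstacle is really the sdd-cone closure-under-sum step in clause (i) and the block-diagonal characterization of sdd matrices in clause (ii); everything else is bookkeeping in the sup representation. Once these two lemma-level facts about $S^{N}_{sdd}$ are in hand, Definition~\ref{SOS-SA} is satisfied for $f_1+f_2$ and the proposition follows.
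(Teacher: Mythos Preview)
Your proposal is correct and follows essentially the same approach as the paper's proof: write $f_1+f_2$ as a supremum over $\Omega_1\times\Omega_2$ and then verify clauses (i) and (ii) of Definition~\ref{SOS-SA}. In fact you are more thorough than the paper, which simply asserts ``Clearly'' for the first-order SDSOS-convexity of the inner polynomial and merely ``Note[s] that $\Omega_1\times\Omega_2$ is also a nonempty compact second-order cone program representable set'' without supplying the block-diagonal argument you give.
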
	
\begin{proof}
As $f_{i}(\bx),\ i = 1, 2$ are first-order SDSOS-convex semi-algebraic functions, by definition, $ f_{i}(\bx) = \sup\limits_{\by^{i} \in \Omega_{i}}\left\{h_{0}^{i}(\bx) + \sum^{s_{i}}_{j=1}y_{j}^{i}h_{j}^{i}(\bx)\right\},$ and $\Omega_{i}$ is a nonempty compact second-order cone program representable set given by
\begin{align*}
\Omega_i = \left\{\by^{i} \in \mathbb{R}^{s_{i}} \colon \exists\, \bz^{i}\in \mathbb{R}^{p_{i}} \textrm{ s.t. } A_{0}^{i} +\sum^{s_{i}}_{j=1}y_{j}^{i}A_{j}^{i} +\sum^{p_{i}}_{\ell=1}z_{\ell}^{i}B_{\ell}^{i} \in S_{ sdd}^{q_i}\right\}.
\end{align*}
Then,
\begin{equation*}
f_{1}(\bx)+f_{2}(\bx)=\sup_{(\by^{1}, \by^{2}) \in \Omega_{1}\times\Omega_{2}}\left\{h_{0}^{1}(\bx)+h_{0}^{2}(\bx)+\sum_{j=1}^{s_{1}} y_{j}^{1} h_{j}^{1}(\bx)+\sum_{j=1}^{s_{2}} y_{j}^{2} h_{j}^{2}(\bx)\right\}.
\end{equation*}
Clearly, $h_{0}^{1}(\bx)+h_{0}^{2}(\bx)+\sum_{j=1}^{s_{1}} y_{j}^{1} h_{j}^{1}(\bx)+\sum_{j=1}^{s_{2}} y_{j}^{2} h_{j}^{2}(\bx)$ is a first-order SDSOS-convex polynomial on $\mathbb{R}^{n}.$
Note that $\Omega_{1} \times \Omega_{2}$ is also a nonempty compact second-order cone program representable set.
Thus, $f_1 + f_2$ is a first-order SDSOS-convex semi-algebraic function.
\end{proof}

\begin{example}\label{example-2}{\rm 
The class of first-order SDSOS-convex semi-algebraic functions contains many common nonsmooth convex functions.
Here we list several standard examples.
\begin{enumerate}[\upshape (a)]
\item Let $f(\bx) = \|\bx\|_2.$
Note that $f$ is a first-order SDSOS-convex semi-algebraic function, as it can be expressed as
\begin{align*}
\|\bx\|_2 = \max_{\by \in \Omega} \sum_{i=1}^{n} x_{i} y_{i},
\end{align*}
where $\Omega = \{\by\in\mathbb{R}^n \colon \|\by\|_2\leq1\}.$
As
\begin{align*}
\Omega & =\{\by\in\mathbb{R}^n \colon \|\by\|_2 \leq 1\} \\
& = \left\{ \by\in\mathbb{R}^n \colon M(\by):= I_{s+1}+\sum_{j=1}^{s}y_{j}A_{j}= \begin{pmatrix}
							1	& y_1 & \cdots & y_s \\
							y_1	& 1 & \cdots & 0 \\
							\vdots 	& \vdots  & \ddots  & \vdots  \\
							y_s	& 0 & \cdots  & 1
						\end{pmatrix} \succeq 0  \right\},
\end{align*}
where $I_{s+1}$ denotes the identity matrix of order $s+1,$ and $A_{j}$ is an $(s+1)\times (s+1)$ matrix whose elements satisfy:
\[
(A_j)_{i,k} =
\begin{cases}
1, & \text{if} \, \, i=1, k=j+1 \, \, \text{or} \, \, i=j+1,k=1, \\
0, & \text{otherwise}.
\end{cases}
\]
We now show $M(\by) \in S_{sdd}^{s+1}.$
Indeed, $D = \operatorname{diag}\left(1, \left | y_1 \right | , \ldots, \left | y_s \right |\right),$ then we have
\begin{equation*}
DM(\by)D=\begin{pmatrix}
1	& y_{1}\left | y_1 \right |  & \cdots & y_{s}\left | y_s \right |\\
y_{1}\left | y_1 \right |	& y_{1}^{2} & \cdots & 0 \\
\vdots	& \vdots & \ddots & \vdots \\
y_{s}\left | y_s \right |	& 0 & \cdots & y_{s}^{2}
\end{pmatrix}
\end{equation*}
is a diagonally dominant matrix.
Thus, $\Omega$ is a compact second-order cone program representable set.
Then, we can see
\begin{align*}
h_f(\bx,\bz) &=\sum_{i=1}^{n}x_iy_i -\sum_{i=1}^{n}z_iy_i-
\begin{pmatrix}
y_1, \cdots, y_n
\end{pmatrix}
\begin{pmatrix}
x_1-z_1 \\
\vdots \\
x_n-z_n
\end{pmatrix}  \\
& = \sum_{i=1}^{n}x_iy_i -\sum_{i=1}^{n}z_iy_i-\sum_{i=1}^{n}x_iy_i +\sum_{i=1}^{n}z_iy_i\\
& = 0.
\end{align*}
For any $\bx \in \mathbb{R}^{n},$ $\bz\in \mathbb{R}^{n},$ $h_f(\bx,\bz) \equiv 0,$ so $h_f$ is an SDSOS polynomial.

\item Let $f(\bx) = \|\bx\|_1.$
Observe that $f(\bx)$ can equivalently be expressed as $f(\bx) = \max\limits_{\by \in \Omega} \sum\limits_{i=1}^{n} x_{i} y_{i},$ where $\Omega = \left\{\by \in \mathbb{R}^{n} \colon \left|y_{i}\right| \leq 1, i = 1,\ldots, n \right\}.$
First, we show that $\Omega$ is a compact second-order cone program representable set.
To see this, for each component $y_i,$ we let
$M_{i}\left(y_{i}\right)=\left(\begin{array}{cc}
1 & y_{i} \\
y_{i} & 1
\end{array}\right),$
and then construct a block diagonal matrix:
\begin{align*}
M(\by)=\operatorname{diag}\left(M_{1}\left(y_{1}\right), M_{2}\left(y_{2}\right), \ldots, M_{n}\left(y_{n}\right)\right),
\end{align*}
which is asked to be an $sdd$ matrix, i.e., there is a diagonal matrix $D=\operatorname{diag}\left(d_{1}, d_{1}, \ldots, d_{n}, d_{n}\right)$ such that $D M(\by) D$ is a $dd$ matrix.
Computing each $2 \times 2$ block matrix yields that
\begin{align*}
D_{i} M_{i}\left(y_{i}\right) D_{i}&=\left(\begin{array}{cc}
d_{i}^{2} & d_{i}^{2} y_{i} \\
d_{i}^{2} y_{i} & d_{i}^{2}
\end{array}\right),
\end{align*}
since $D M(\by) D$ is a $dd$ matrix, by definition, it has $d_{i}^{2} \geq d_{i}^{2}\left|y_{i}\right|$ for each $i = 1, \ldots, n,$ and so $\left|y_{i}\right| \leq 1.$
Hence, $\Omega $ is a compact second-order cone program representable set.
It follows from the first-order SDSOS-convexity of $\sum\limits_{i = 1}^n x_{i} y_{i}$ that $f$ is a first-order SDSOS-convex semi-algebraic function.

\item Given a matrix $A \in \mathbb{R}^{m \times n}$ and a vector $b \in \mathbb{R}^{m},$ the least squares function is defined as
\begin{align*}
f(\bx) = \|A\bx - \bb\|_{2}^{2}.
\end{align*}
Along with Example~\ref{example-2} (a), $f(\bx)$ is a first-order SDSOS-convex semi-algebraic function. According to Proposition~\ref{proposition}, the least squares function with $\ell_{1}$-regularization  $$
f_{\mu}(\bx) = \|A \bx - \bb\|_{2}^{2} + \mu\|\bx\|_{1},
$$
where $\mu > 0,$ is a first-order SDSOS-convex semi-algebraic function.
\end{enumerate}
}\end{example}

\section{Exact SOCP Relaxation for first-order SDSOS-convex Semialgebraic Programs}\label{sect:4}

In this section, we focus on the study of the following optimization problems with first-order SDSOS-convex semi-algebraic data,
\begin{align}\label{P}
f_* := \min\limits_{\bx\in\mathbb{R}^{n}} \ \left\{f_0(\bx) \colon f_{i}(\bx)\leq0, \ i=1,\ldots,m \right\}, \tag{${\rm P}$}
\end{align}
where each $f_{i},$ $i = 0, 1, \ldots, m$ is a first-order SDSOS-convex semi-algebraic function of the form
\begin{align}\label{function f_{i}}
f_{i}(\bx) := \sup_{\by^{i} \in \Omega_{i}}\left\{h_{0}^{i}(\bx) +\sum^{s_{i}}_{j=1}y_{j}^{i}h_{j}^{i}(\bx)\right\},
\end{align}
such that
\begin{itemize}
\item[(i)] each $h_{j}^{i},$ $j = 0, \ldots, s_{i},$ $i = 0, 1, 2, \ldots, m$ is a polynomial with degree at most $2d,$ and for each $\by^{i}:=(y_{1}^{i},\ldots,y_{s_{i}}^{i})\in\Omega_{i},$ $h_{0}^{i}+\sum^{s_{i}}_{j=1}y_{j}^{i}h_{j}^{i}$ is a first-order SDSOS-convex polynomial on $\mathbb{R}^{n};$
\item[(ii)] each $\Omega_{i},$ $i = 0, 1, 2, \ldots, m$ is a nonempty compact second-order cone program representable set given by
\begin{align*}
\Omega_i = \left\{\by^{i} \in \mathbb{R}^{s_{i}} \colon \exists\, \bz^{i}\in \mathbb{R}^{p_{i}} \textrm{ s.t. } A_{0}^{i} +\sum^{s_{i}}_{j=1}y_{j}^{i}A_{j}^{i} +\sum^{p_{i}}_{\ell=1}z_{\ell}^{i}B_{\ell}^{i} \in S_{ sdd}^{q_i}\right\},
\end{align*}
where $A_{j}^{i},$ $j = 0, 1, \ldots, s_{i},$ and $B_{\ell}^{i},$ $\ell = 1, \ldots, p_{i},$ are $q_{i}\times q_{i}$ symmetric matrices.
\end{itemize}

Throughout this paper, we always assume that the feasible set $K:=\{\bx\in\mathbb{R}^{n}\colon f_{i}(\bx)\leq0, \ i = 1, \ldots, m\}$ of the problem~\eqref{P} is nonempty.
For convenience, we denote by
$$I:=\{0, 1, 2, \ldots, m\}.$$
Since for each $i = 0, 1, \ldots, m,$ the function $f_i$ as in \eqref{function f_{i}} belongs to a class of convex functions, so we can denote the set
\begin{align}\label{convexcone}
\mathcal{C}:=\bigcup_{\by^{i}\in\Omega_{i},\lambda_{i}\geq0}{\rm epi}\left(\sum_{i=1}^m\lambda_{i}\left(h_{0}^{i}+\sum_{j=1}^{s_{i}}y^{i}_{j}h^{i}_{j}\right)\right)^*,
\end{align}
which is a convex cone (see, e.g., \cite[Proposition~2.3]{Jeyakumar2010}).
Now, with the aid of the convex cone~\eqref{convexcone}, we first invoke the following Farkas-type lemma, which will play an important role in deriving our results.
The proof is essentially same as in \cite[Lemma 3.1]{Yang2024}, we therefore omit it.
\begin{lemma}\label{lemma6} 
Let $f_{i}: \mathbb{R}^{n} \to \mathbb{R},$ $i\in I,$ be convex functions defined as in \eqref{function f_{i}}.
Assume that the set $K := \{\bx\in\mathbb{R}^{n} \colon f_{i}(\bx) \leq 0, \ i = 1, \ldots, m\}$ is nonempty.
Then the following statements are equivalent$:$
\begin{itemize}
\item[{\rm (i)}] $K\subseteq \left\{\bx\in\mathbb{R}^{n} \colon f_{0}(\bx)\geq0 \right\};$
\item[{\rm (ii)}] $(0,0)\in {\rm epi\,}f_{0}^*+{\rm cl\,}\bigcup_{\by^{i}\in\Omega_{i},\lambda_{i}\geq0}{\rm epi}\left(\sum_{i=1}^m\lambda_{i}\left(h_{0}^{i}+\sum_{j=1}^{s_{i}}y^{i}_{j}h^{i}_{j}\right)\right)^*.$
\end{itemize}
\end{lemma}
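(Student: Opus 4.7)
The plan is to translate the set inclusion in (i) into an epigraphical inequality for conjugate functions, and then to identify $\operatorname{epi}\delta_K^{*}$ with $\operatorname{cl}\mathcal{C}$ via the sup-representations of $f_1,\ldots,f_m$. This follows a standard convex-analysis recipe for Farkas-type lemmas that works without any Slater-type qualification, at the cost of the closure appearing in (ii).

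For the easy direction (ii)\,$\Rightarrow$\,(i), I would fix an arbitrary $\bar{\bx}\in K$ and unpack the assumption: write $(0,0)=(\bu_0,r_0)+\lim_{k\to\infty}(\bw_k,s_k)$ with $(\bu_0,r_0)\in\operatorname{epi}f_0^{*}$ and $(\bw_k,s_k)\in\mathcal{C}$. By construction each $(\bw_k,s_k)$ lies in the epigraph of a conjugate of the form $\bigl(\sum_{i=1}^{m}\lambda_i^{(k)}(h_0^i+\sum_j y_j^{i,(k)}h_j^i)\bigr)^{*}$ with $\lambda_i^{(k)}\geq 0$ and $\by^{i,(k)}\in\Omega_i$. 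Applying the Fenchel--Young inequality at $\bar{\bx}$ both to $f_0^{*}$ and to the inner weighted sum, together with the bound $h_0^i(\bar{\bx})+\sum_j y_j^{i,(k)}h_j^i(\bar{\bx})\leq f_i(\bar{\bx})\leq 0$ for every $i=1,\ldots,m$, one deduces $\langle \bu_0+\bw_k,\bar{\bx}\rangle-(r_0+s_k)\leq f_0(\bar{\bx})$. Passing to the limit $k\to\infty$ then yields $0\leq f_0(\bar{\bx})$, which is exactly (i).

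For the harder direction (i)\,$\Rightarrow$\,(ii), the starting point is the rewriting of (i) as $f_0+\delta_K\geq 0$ on $\mathbb{R}^{n}$, equivalently $(f_0+\delta_K)^{*}(0)\leq 0$, i.e.\ $(0,0)\in\operatorname{epi}(f_0+\delta_K)^{*}$. Since $f_0$ is a finite convex function (hence continuous) and $\delta_K$ is proper lower semi-continuous convex, the standard conjugate-of-a-sum formula gives
\begin{align*}
\operatorname{epi}(f_0+\delta_K)^{*}=\operatorname{cl}\bigl(\operatorname{epi}f_0^{*}+\operatorname{epi}\delta_K^{*}\bigr).
\end{align*}
The technical heart is then to identify
\begin{align*}
\operatorname{epi}\delta_K^{*}=\operatorname{cl}\bigcup_{\by^{i}\in\Omega_i,\,\lambda_i\geq 0}\operatorname{epi}\left(\sum_{i=1}^{m}\lambda_i\Bigl(h_0^i+\sum_{j=1}^{s_i}y_j^i h_j^i\Bigr)\right)^{*}=\operatorname{cl}\mathcal{C}.
\end{align*}
This would follow from the familiar representation $\operatorname{epi}\delta_K^{*}=\operatorname{cl}\bigcup_{\lambda_i\geq 0}\operatorname{epi}(\sum_i\lambda_i f_i)^{*}$ for a system of convex inequalities, combined with the sup-representation $f_i(\bx)=\sup_{\by^i\in\Omega_i}\{h_0^i(\bx)+\sum_j y_j^i h_j^i(\bx)\}$ and the compactness of each $\Omega_i$, which allow the parameter $\by^i$ over $\Omega_i$ to be absorbed into the union appearing under the closure. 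Collapsing the resulting nested closures into the single $\operatorname{cl}$ sitting in front of $\mathcal{C}$ in (ii) then finishes the argument.

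The main obstacle I expect is precisely this handling of the closure operations. Without a Slater-type qualification, neither the conjugate-of-sum formula nor the identification of $\operatorname{epi}\delta_K^{*}$ can be written without its ``cl'', so some care is needed to show that the outer closure of a sum of two closures already equals $\operatorname{cl}(\operatorname{epi}f_0^{*}+\mathcal{C})$. The fact that $\mathcal{C}$ is a convex cone, cited from \cite[Proposition~2.3]{Jeyakumar2010} in the excerpt, is exactly what makes this collapse possible, since it lets arbitrary conic combinations and sequential limits be absorbed within a single set; this is also the feature of the construction that parallels the SOS-convex semi-algebraic setting of \cite{Chieu2018}, the $sdd$ replacing the psd structure only inside the description of the parameter set $\Omega_i$.
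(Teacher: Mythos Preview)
The paper does not actually prove this lemma; it defers entirely to \cite[Lemma~3.1]{Yang2024}, whose argument is the standard conjugate-calculus route you describe (via $\operatorname{epi}(f_0+\delta_K)^{*}$ and the identification $\operatorname{epi}\delta_K^{*}=\operatorname{cl}\,\mathcal{C}$), so your approach is both correct and the intended one. Regarding the obstacle you flag: since $f_0$ is finite-valued convex on $\mathbb{R}^n$ it is continuous everywhere, in particular at a point of $K\neq\emptyset$, so the \emph{exact} sum rule $\operatorname{epi}(f_0+\delta_K)^{*}=\operatorname{epi}f_0^{*}+\operatorname{epi}\delta_K^{*}$ holds without the outer closure, and no ``closure collapse'' is needed once $\operatorname{epi}\delta_K^{*}=\operatorname{cl}\,\mathcal{C}$ is established.
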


\subsection{Computing the optimal value of the problem~\eqref{P}}\label{subsec-1}
We now formulate the SOCP relaxation dual problem for the problem~\eqref{P}.
\begin{align}\label{SDD}
\sup\limits_{\substack{\gamma, \lambda_{j}^{i},z_{\ell}^{i}}} \ \ \ & \gamma\tag{$\widehat{\rm Q}$}\\
\textrm{s.t.}\quad\   & \sum_{i=0}^{m}\left(\lambda_{0}^{i}h_{0}^{i} +\sum^{s_{i}}_{j=1}\lambda_{j}^{i}h_{j}^{i}\right)-\gamma\in\widetilde{\Sigma}[\bx]_{2d},\notag\\
&\lambda_{0}^{i}A_{0}^{i} +\sum^{s_{i}}_{j=1}\lambda_{j}^{i}A_{j}^{i} +\sum^{p_{i}}_{\ell=1}z_{\ell}^{i}B_{\ell}^{i} \in S_{sdd}^{q_i},\ i\in I,\notag\\
&\lambda_{0}^{0}=1, \ \lambda_{0}^{i}\geq0, \ \lambda_{j}^{i}\in\mathbb{R}, \ j=1,\ldots, s_{i}, \ z_{\ell}^{i}\in\mathbb{R}, \  \ell=1,\ldots,p_{i}, \  i=1,\ldots,m.\notag
\end{align}
It is worth noting that the problem~\eqref{SDD} is an SOCP problem; moreover, it can be efficiently solved via interior point methods.

\medskip
In what follows, we propose the strong duality result between the primal problem~\eqref{P} and its dual problem~\eqref{SDD}; apart from this, we also need the following assumption.
\begin{itemize}
\item[{\bf (A1)}] The convex cone
$\mathcal{C}$ as in \eqref{convexcone}
is closed.
\end{itemize}
\begin{remark}{\rm
It is worth mentioning that if the {\it Slater condition} holds for \eqref{P}, that is, there exists $\widehat{\bx}\in\mathbb{R}^{n}$ such that
\begin{align*}
f_{i}(\widehat{\bx}) < 0, \ i = 1, \ldots, m,
\end{align*}
then the convex cone $\mathcal{C}$ is closed (see, e.g., \cite[Proposition~3.2]{Jeyakumar2010}).
}
\end{remark}

\begin{theorem}[\textbf{Strong Duality Theorem}]\label{thm1}
Suppose that assumption {\bf (A1)} holds.
Then
\begin{align*}
\inf\eqref{P}=\max\eqref{SDD}.
\end{align*}
\end{theorem}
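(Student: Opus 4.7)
The plan is to prove two inequalities: weak duality $\sup\eqref{SDD}\le\inf\eqref{P}$ and strong duality $\inf\eqref{P}\le\max\eqref{SDD}$, the latter by explicitly constructing a dual feasible point attaining value $\inf\eqref{P}$.

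For weak duality, I would fix any dual feasible $(\gamma,\lambda_j^i,z_\ell^i)$ and any $\bx\in K$. Since every SDSOS polynomial is nonnegative, the first dual constraint yields $\sum_{i=0}^{m}(\lambda_0^ih_0^i(\bx)+\sum_j\lambda_j^ih_j^i(\bx))\ge\gamma$. The $i=0$ SDD constraint together with $\lambda_0^0=1$ says exactly $(\lambda_j^0)_j\in\Omega_0$, so $h_0^0+\sum_j\lambda_j^0h_j^0\le f_0$. For $i\ge 1$ with $\lambda_0^i>0$, dividing the SDD constraint through by $\lambda_0^i$ (using that $S_{sdd}^{q_i}$ is a cone) shows $(\lambda_j^i/\lambda_0^i)_j\in\Omega_i$, whence $\lambda_0^ih_0^i+\sum_j\lambda_j^ih_j^i\le\lambda_0^if_i\le 0$ on $K$. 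When $\lambda_0^i=0$, compactness of $\Omega_i$ forces its recession cone to be trivial, so all $\lambda_j^i=0$ and that row vanishes. Summing over $i$ gives $\gamma\le f_0(\bx)$.

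For strong duality, assume $f_*:=\inf\eqref{P}$ is finite (otherwise weak duality forces both sides to be $-\infty$). Then $K\subseteq\{\bx:(f_0-f_*)(\bx)\ge 0\}$, so Lemma~\ref{lemma6} applied to $f_0-f_*$ (still first-order SDSOS-convex semi-algebraic, since constants do not alter the polynomial $h_f$) together with \textbf{(A1)}, which lets us drop the closure, delivers $(0,0)\in{\rm epi\,}(f_0-f_*)^*+\mathcal{C}$. This furnishes $(u^*,r)\in{\rm epi\,}(f_0-f_*)^*$ together with $\bar\by^i\in\Omega_i$ and $\bar\lambda_i\ge 0$ for $i\ge 1$ and $(-u^*,-r)\in{\rm epi\,}g^*$, where $g:=\sum_{i=1}^{m}\bar\lambda_i(h_0^i+\sum_j\bar y_j^ih_j^i)$.

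The key technical step, and what I expect to be the main obstacle, is to peel off a single $\bar\by^0\in\Omega_0$ witnessing $f_0^*(u^*)$ pointwise, since a priori the epigraph of $f_0^*$ is only the closed convex hull of those of the $(h_0^0+\sum_jy_j^0h_j^0)^*$. Because $h_0^0+\sum_jy_j^0h_j^0$ is convex in $\bx$ (SOS-convex) and affine in $\by^0$, and $\Omega_0$ is compact convex, Sion's minimax theorem applied to $f_0^*(u^*)=\sup_\bx\inf_{\by^0\in\Omega_0}\{\langle u^*,\bx\rangle-h_0^0(\bx)-\sum_jy_j^0h_j^0(\bx)\}$ swaps the order to yield $f_0^*(u^*)=\inf_{\by^0\in\Omega_0}(h_0^0+\sum_jy_j^0h_j^0)^*(u^*)$, and lower semicontinuity of the inner map (a supremum of affine functions of $\by^0$) on the compact $\Omega_0$ gives attainment at some $\bar\by^0$. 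With $\bar g_0:=h_0^0+\sum_j\bar y_j^0h_j^0$, the Fenchel--Young inequalities for $\bar g_0$ at $u^*$ and $g$ at $-u^*$, added, produce $\bar g_0(\bx)+g(\bx)\ge f_*$ for all $\bx$. The polynomial $\tilde g:=\bar g_0+g-f_*$ is then first-order SDSOS-convex (a nonnegative combination plus a constant) and globally nonnegative, so Lemma~\ref{lemma4} puts it in $\widetilde\Sigma[\bx]_{2d}$. Setting $\lambda_0^0=1$, $\lambda_j^0=\bar y_j^0$, and for $i\ge 1$ setting $\lambda_0^i=\bar\lambda_i$, $\lambda_j^i=\bar\lambda_i\bar y_j^i$ with appropriately scaled SDD witnesses $z_\ell^i$ (trivial when $\bar\lambda_i=0$) produces a feasible point of \eqref{SDD} with $\gamma=f_*$, and weak duality closes the argument.
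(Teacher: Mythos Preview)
Your proof is correct and follows essentially the same route as the paper's: weak duality via the $\lambda_0^i$ case split (with compactness of $\Omega_i$ handling $\lambda_0^i=0$, which is exactly the paper's appeal to \cite[Lemma~4.1]{Chieu2018}), and strong duality via Lemma~\ref{lemma6} under {\bf (A1)} together with Lemma~\ref{lemma4} to certify the resulting nonnegative first-order SDSOS-convex polynomial as SDSOS. Your explicit invocation of Sion's minimax theorem to select a single $\bar\by^0\in\Omega_0$ is in fact more careful than the paper, which simply asserts ``since $\Omega_0$ is compact, there exists $\overline\by^0\in\Omega_0$ such that $f_0(\bx)=h_0^0(\bx)+\sum_j\overline y_j^0 h_j^0(\bx)$''---a statement that, read literally as holding for all $\bx$, requires precisely the minimax argument you supply.
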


\begin{proof}
We first show that $\inf\eqref{P}\leq \max\eqref{SDD}.$
Let $\overline{\gamma}:=\inf\eqref{P}\in\mathbb{R},$ so $f_{0}(\bx)-\overline{\gamma}\geq0$ for all $\bx\in K.$
It then follows from assumption {\bf (A1)} and Lemma~\ref{lemma6} that there exist $\overline{\lambda}_{i}\geq0$ and $\overline{\by}^{i}\in\Omega_{i},$ $i = 1, \ldots, m$ such that
\begin{align*}
(0, -\overline{\gamma})\in {\rm epi\,}f_{0}^* + {\rm epi}\left(\sum_{i=1}^m\overline{\lambda}_{i}\left(h_{0}^{i}+\sum_{j=1}^{s_{i}}\overline{y}^{i}_{j}h^{i}_{j}\right)\right)^*.
\end{align*}
Then, there exist $(\bxi,\alpha)\in{\rm epi\,}f_{0}^*$ and $(\bzeta,\beta)\in{\rm epi}\left(\sum\limits_{i=1}^m\overline{\lambda}_{i}\left(h_{0}^{i}+\sum\limits_{j=1}^{s_{i}}\overline{y}^{i}_{j}h^{i}_{j}\right)\right)^*$ such that
$(0,-\overline{\gamma})=(\bxi,\alpha)+(\bzeta,\beta),$
and so, for each $\bx\in\mathbb{R}^{n},$ we have
\begin{align*}
	&\langle \bxi,\bx\rangle-f_{0}(\bx)+\langle \bzeta,\bx\rangle-\sum_{i=1}^m\overline{\lambda}_{i}\left(h_{0}^{i}(\bx)+\sum_{j=1}^{s_{i}}\overline{y}^{i}_{j}h^{i}_{j}(\bx)\right)\\
	\leq\ &f_{0}^*(\bxi)+\left(\sum_{i=1}^m\overline{\lambda}_{i}\left(h_{0}^{i}+\sum_{j=1}^{s_{i}}\overline{y}^{i}_{j}h^{i}_{j}\right)\right)^*(\bzeta)\\
	\leq\ & \alpha+\beta=-\overline{\gamma},
\end{align*}
i.e., for each $\bx\in \mathbb{R}^{n},$
\begin{align}\label{thm1rel3}
	f_{0}(\bx)+\sum_{i=1}^m\overline{\lambda}_{i}\left(h_{0}^{i}(\bx)+\sum_{j=1}^{s_{i}}\overline{y}^{i}_{j}h^{i}_{j}(\bx)\right)-\overline{\gamma}\geq0.
\end{align}
Note that $f_{0}$ is defined as $f_{0}(\bx) = \sup_{\by^{0}\in\Omega_{0}}\{h_{0}^{0}(\bx) +\sum^{s_{0}}_{j=1}y_{j}^{0}h_{j}^{0}(\bx)\}.$
Since $\Omega_{0}$ is a compact set, there exists $\overline{\by}^{0}\in\Omega_{0}$ such that
\begin{align*}
	f_{0}(\bx)=h_{0}^{0}(\bx)+\sum_{j=1}^{s_{0}}\overline{y}^{0}_{j}h^{0}_{j}(\bx).
\end{align*}
It then follows from \eqref{thm1rel3} that
\begin{align*}
	h_{0}^{0}(\bx)+\sum_{j=1}^{s_{0}}\overline{y}^{0}_{j}h^{0}_{j}(\bx)+\sum_{i=1}^m\overline{\lambda}_{i}\left(h_{0}^{i}(\bx)+\sum_{j=1}^{s_{i}}\overline{y}^{i}_{j}h^{i}_{j}(\bx)\right)-\overline{\gamma}\geq0
\end{align*}
for all $\bx\in\mathbb{R}^n.$
Let $\Phi\colon\R^n\to\R$ be defined by
\begin{align*}
	\Phi(\bx):=h_{0}^{0}(\bx)+\sum_{j=1}^{s_{0}}\overline{y}^{0}_{j}h^{0}_{j}(\bx)+\sum_{i=1}^m\overline{\lambda}_{i}\left(h_{0}^{i}(\bx)+\sum_{j=1}^{s_{i}}\overline{y}^{i}_{j}h^{i}_{j}(\bx)\right)-\overline{\gamma}.
\end{align*}
Since $h_{0}^{i}+\sum_{j=1}^{s_{i}}\overline{y}^{i}_{j}h^{i}_{j},$ $i\in I,$ are first-order SDSOS-convex polynomials and $\lambda_{i}\geq0,$ $i=1,\ldots,m,$ we see that $\Phi$ is also a first-order SDSOS-convex polynomial.
Moreover, since $\Phi$ is nonnegative on $\mathbb{R}^n,$ it follows from Lemma~\ref{lemma4} that $\Phi$ is SDSOS, i.e,
\begin{align}\label{thm1rel4}
	h_{0}^{0}+\sum_{j=1}^{s_{0}}\overline{y}^{0}_{j}h^{0}_{j}+\sum_{i=1}^m\overline{\lambda}_{i}\left(h_{0}^{i}+\sum_{j=1}^{s_{i}}\overline{y}^{i}_{j}h^{i}_{j}\right)-\overline{\gamma} \in \widetilde{\Sigma}[\bx]_{2d}.
\end{align}
On the other hand, for each $i\in I,$ as $\overline{\by}^{i}=(\overline{y}^{i}_{1},\ldots,\overline{y}^{i}_{s_{i}})\in\Omega_{i},$ there exists $\overline{\bz}^{i}:=(\overline{z}^{i}_{1},\ldots,\overline{z}^{i}_{p_{i}})\in \mathbb{R}^{p_{i}}$ such that
\begin{align}\label{thm1rel5}
	A_{0}^{i}+\sum_{j=1}^{s_{i}}\overline{y}_{j}^{i}A_{j}^{i}+\sum_{\ell=1}^{p_{i}}\overline{z}_{\ell}^{i}B_{\ell}^{i}\in S_{sdd}^{q_i}, \ i\in I.
\end{align}
For each $i=1,\ldots,m,$ let $\lambda_{0}^{i}:=\overline{\lambda}_{i},$ $\lambda_{j}^{i}:=\overline{\lambda}_{i}\overline{y}_{j}^{i},$ and $z_{\ell}^{i}:=\overline{\lambda}_{i}\overline{z}_{\ell}^{i},$  $j=1,\ldots,s_{i},$ $\ell=1,\ldots,p_{i}.$
Let $\lambda_{0}^{0}:=1,$ $\lambda_{j}^{0}:=\overline{y}_{j}^{0},$ $j=1,\ldots,s_{0},$ and $z_{\ell}^{0}:=\overline{z}_{\ell}^{0},$ $\ell=1,\ldots,s_{0}.$
Then, from \eqref{thm1rel4}, we have
\begin{align*}
	\sum_{i=0}^{m}\left(\lambda_{0}^{i}h_{0}^{i} +\sum^{s_{i}}_{j=1}\lambda_{j}^{i}h_{j}^{i}\right)-\overline{\gamma}\in\widetilde{\Sigma}[\bx]_{2d}.
\end{align*}
Note that $\overline{\lambda}_{i},$ $i=1,\ldots,m,$ are nonnegative.
It follows from \eqref{thm1rel5} that
\begin{align*}
	\lambda_{0}^{i}A_{0}^{i} +\sum^{s_{i}}_{j=1}\lambda_{j}^{i}A_{j}^{i} +\sum^{p_{i}}_{\ell=1}z_{\ell}^{i}B_{\ell}^{i}\in S_{ sdd}^{q_i}, \ i\in I.
\end{align*}
It means that the tuple $(                       \overline{\gamma},\blambda^{0},\ldots,\blambda^m,\bz^{0},\ldots,\bz^m)$ is a feasible solution of the problem~\eqref{SDD}, where for each $i\in I,$ $\blambda^{i}:=(\lambda^{i}_{0},\lambda^{i}_{1},\ldots,\lambda^{i}_{s_{i}})$ and $\bz^{i}:=(z^{i}_{1},\ldots,z^{i}_{p_{i}}).$
This fact leads to
\begin{align*}
	\inf\eqref{P}=\overline{\gamma}\leq\max\eqref{SDD}.
\end{align*}

Next, we claim that $\inf\eqref{P}\geq\max\eqref{SDD}.$
Let $(\gamma,\blambda^{0},\ldots,\blambda^m,\bz^{0},\ldots,\bz^m)$ be any feasible solution of the problem~\eqref{SDD}.
Then, we have
\begin{align}
	&\sum_{i=0}^{m}\left(\lambda_{0}^{i}h_{0}^{i} +\sum^{s_{i}}_{j=1}\lambda_{j}^{i}h_{j}^{i}\right)-\gamma\in\widetilde{\Sigma}[\bx]_{2d},\label{thm1rel1}\\
	&\lambda_{0}^{i}A_{0}^{i} +\sum^{s_{i}}_{j=1}\lambda_{j}^{i}A_{j}^{i} +\sum^{p_{i}}_{\ell=1}z_{\ell}^{i}B_{\ell}^{i} \in S_{ sdd}^{q_i},\ i\in I\notag .
\end{align}
For each $i\in I,$ pick $\overline{\by}^{i}:=(\overline{y}^{i}_{1},\ldots,\overline{y}^{i}_{s_{i}})\in\Omega_{i}.$
Then, by the definition of $\Omega_{i},$ there exist $\overline{\bz}^{i}:=(\overline{z}^{i}_{1},\ldots,\overline{z}^{i}_{p_{i}})\in \mathbb{R}^{p_{i}},$ $i\in I,$ such that $A^{i}_{0}+\sum^{s_{i}}_{j=1}\overline y_{j}^{i}A_{j}^{i}+\sum^{p_{i}}_{\ell=1}\overline z_{\ell}^{i}B_{\ell}^{i}\in S_{ sdd}^{q_i}.$
Now, for each $i\in I$ and each $j=1,\ldots,s_{i},$ put
\begin{align*}
	\widetilde{y}_{j}^{i}:=
	\left\{
	\begin{array}{ll}
		\frac{\lambda_{j}^{i}}{\lambda_{0}^{i}}, & \textrm{if } \lambda_{0}^{i}>0, \\
		\overline{y}_{j}^{i}, & \textrm{if } \lambda_{0}^{i}=0,
	\end{array}
	\right.
	\quad \textrm{ and } \quad
	\widetilde{z}_{\ell}^{i}:=
	\left\{
	\begin{array}{ll}
		\frac{z_{\ell}^{i}}{\lambda_{0}^{i}}, & \textrm{if } \lambda_{0}^{i}>0, \\
		\overline{z}_{\ell}^{i}, & \textrm{if } \lambda_{0}^{i}=0.
	\end{array}
	\right.
\end{align*}
Note that $\lambda_{0}^{0}=1.$
Then, for each $i\in I,$ we have
\begin{align*}
	A_{0}^{i}+\sum_{j=1}^{s_{i}}\widetilde{y}^{i}_{j}A_{j}^{i}+\sum_{\ell=1}^{p_{i}}\widetilde{z}^{i}_{j}B_{\ell}^{i}=
	\left\{
	\begin{array}{ll}
		\displaystyle \frac{1}{\lambda_{0}^{i}}\left(\lambda_{0}^{i}A_{0}^{i}+\sum_{j=1}^{s_{i}}\lambda_{j}^{i}A_{j}^{i}+\sum_{\ell=1}^{p_{i}}z_{\ell}^{i}B_{\ell}^{i}\right), & \textrm{if } \lambda_{0}^{i}>0, \\
		\displaystyle A_{0}^{i}+\sum_{j=1}^{s_{i}}\overline{y}_{j}^{i}A_{j}^{i}+\sum_{\ell=1}^{p_{i}}\overline{z}_{\ell}^{i}B_{\ell}^{i}, & \textrm{if } \lambda_{0}^{i}=0,
	\end{array}
	\right.
\end{align*}
which is a scaled diagonally dominant matrix.
So, we see that $\widetilde{\by}^{i}:=(\widetilde{y}^{i}_{1},\ldots,\widetilde{y}^{i}_{s_{i}})\in\Omega_{i},$ $i\in I.$
Note that each set $\Omega_{i}$ is compact.
Note that for each $i\in I,$ if $\lambda_{0}^{i}=0,$ then
$\lambda_{j}^{i}=0$ for all $j=1,\ldots,s_{i}$ (see, e.g., \cite[Lemma 4.1]{Chieu2018}).
This implies that for each $i\in I,$
\begin{align*}
	\lambda_{0}^{i}\left(h_{0}^{i}(\bx)+\sum^{s_{i}}_{j=1}\widetilde{y}_{j}^{i}h_{j}^{i}(\bx)\right)=\lambda_{0}^{i}h_{0}^{i}(\bx)+\sum^{s_{i}}_{j=1}\lambda_{0}^{i}\widetilde{y}_{j}^{i}h_{j}^{i}(\bx)=\lambda_{0}^{i}h_{0}^{i}(\bx)+\sum_{j=1}^{s_{i}}\lambda_{j}^{i}h_{j}^{i}(\bx).
\end{align*}
This, together with \eqref{thm1rel1}, yields that for any $\bx\in K,$
\begin{align*}
	f_{0}(\bx)\geq \lambda_{0}^{0}\left(h_{0}^{0}(\bx)+\sum^{s_{0}}_{j=1}\widetilde{y}_{j}^{0}h_{j}^{0}(\bx)\right)\geq\sum_{i=0}^{m}\left(\lambda_{0}^{i}h_{0}^{i}(\bx) +\sum^{s_{i}}_{j=1}\lambda_{j}^{i}h_{j}^{i}(\bx)\right)\geq\gamma,
\end{align*}
i.e., $f_{0}(\bx)\geq\gamma$ for all $\bx\in K.$
Since $(\gamma,\blambda^{0},\ldots,\blambda^m,\bz^{0},\ldots,\bz^m)$ is arbitrary, we have
\begin{align*}
	\inf\eqref{P}\geq \sup\eqref{SDD},
\end{align*}
and thus, the proof is complete.
\end{proof}

\begin{remark}{\rm
As mentioned above, if the Slater condition holds for \eqref{P}, then assumption~{\bf (A1)} is also satisfied.
As a result, Theorem~\ref{thm1} still holds true under the Slater condition.
}\end{remark}

\subsection{Recovering optimal solutions}
By solving the problem~\eqref{SDD}, we can obtain the optimal value of the primal problem~\eqref{P} as shown in  subsection~\ref{subsec-1}.
However, there is no information on optimal solutions from the problem~\eqref{SDD}, in order to recover an optimal solution, we formulate the following Lagrangian dual problem of the problem~\eqref{SDD},
\begin{align}\label{SDP}
\inf_{\substack{\bw\in\mathbb{R}^{s(n,2d)}\\Z_{i}\in S^{q_i},i\in I}} \ & \sum_{\ba\in\mathbb{N}^{n}_{2d}} \left(h_{0}^{0}\right)_{\ba} w_{\ba}+\left\langle A_{0}^{0},Z_{0}\right\rangle \tag{${\rm Q}$}\\
\text{s.t.}\quad\,\ &  \sum_{\ba\in\mathbb{N}_{2d}^n} \left(h_{0}^{i}\right)_{\ba} w_{\ba}+\left\langle A_{0}^{i},Z_{i} \right\rangle\leq0, \ i=1,\ldots,m,\notag\\
& \sum_{\ba\in\mathbb{N}_{2d}^n} \left(h_{j}^{i}\right)_{\ba} w_{\ba}+\left\langle A_{j}^{i}, Z_{i}\right\rangle=0, \ i\in I, \ j=1,\ldots, s_{i},\notag\\
& \left\langle B_{\ell}^{i}, Z_{i} \right\rangle=0, \ i\in I, \ \ell=1,\ldots,p_{i},\notag\\
&  \left\|\left(
\begin{array}{c}
	2(Z_{i})_{rs} \\
	(Z_{i})_{rr} - (Z_{i})_{ss} \\
\end{array}
\right)\right\|\leq (Z_{i})_{rr}+(Z_{i})_{ss}, \ 1\leq r<s\leq q_i, \ i\in I, \nonumber \\
&  \left\|\left(
\begin{array}{c}
	2(\M_d(\bw))_{ij} \\
	(\M_d(\bw))_{ii} - (\M_d(\bw))_{jj} \\
\end{array}
\right)\right\|\leq (\M_d(\bw))_{ii}+(\M_d(\bw))_{jj}, \ 1\leq i<j\leq s(n,d), \nonumber \\
&  w_{\bze}=1.\nonumber
\end{align}
It is also worth noting that the problem~\eqref{SDP} is an SOCP problem, which can be solved efficiently.

We now give the following result, which can extract an optimal solution to the problem~\eqref{P} by solving the SOCP problem~\eqref{SDP}, and the proof is motivated by~\cite[Theorem 4.2]{Chieu2018}.
For this, the following assumption is also needed.
\begin{itemize}
\item[{\bf (A2)}] For each $i\in I,$ there exist $\widehat{\by}^{i}\in\mathbb{R}^{s_{i}},$ $\widehat{\bz}^{i}\in\mathbb{R}^{p_{i}}$ and a diagonal matrix $D_{i}$ with all positive diagonal entries such that $D^{i}\left(A_{0}^{i}+\sum_{j=1}^{s_{i}} \widehat{y}_{j}^{i} A_{j}^{i}+\sum_{\ell=1}^{p_{i}} \widehat{z}_{\ell}^{i} B_{\ell}^{i}\right) D^{i}$ is strictly diagonally dominant.
\end{itemize}

\begin{theorem}\label{thm2}
Suppose that assumptions~{\bf (A1)} and~{\bf (A2)} hold.
Then$,$ we have
\begin{align*}
\inf\eqref{P}=\max\eqref{SDD} =\inf\eqref{SDP}.
\end{align*}
We further assume that $(\overline{\bw}, \overline{Z}_{0},\ldots,\overline{Z}_{m})$ is an optimal solution to the problem~\eqref{SDP}.
Then
\begin{align*}
\overline{\bx}:=\left(L_{\overline{\bw}}(x_{1}),\ldots,L_{\overline{\bw}}(x_{n})\right)
\end{align*}
is an optimal solution to the problem~\eqref{P}.
\end{theorem}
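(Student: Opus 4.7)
The plan is to identify~\eqref{SDP} as the SOCP Lagrangian dual of~\eqref{SDD}, to upgrade weak duality to equality (with attainment in~\eqref{SDP}) using assumption~\textbf{(A2)}, and then to extract $\overline{\bx}$ from the moment vector $\overline{\bw}$ via Lemma~\ref{lemma5}. Rewriting the SDSOS constraint in~\eqref{SDD} via Proposition~\ref{proposition1} (as a Gram matrix in $S^{s(n,d)}_{sdd}$ encoded by the $2\times 2$ PSD blocks of the Remark) casts~\eqref{SDD} as a standard conic SOCP whose dual is precisely~\eqref{SDP}, so weak SOCP duality gives $\max\eqref{SDD}\le\inf\eqref{SDP}$. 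Assumption~\textbf{(A2)} furnishes, for each $i$, points $(\widehat{\by}^{i},\widehat{\bz}^{i})$ that are strictly SDD in the $i$-th conic slot, and taking $\gamma$ sufficiently negative makes the SDSOS polynomial slack strictly positive; the resulting point is a Slater point of~\eqref{SDD}, whence standard SOCP strong duality gives $\max\eqref{SDD}=\inf\eqref{SDP}$ with~\eqref{SDP} attaining its infimum. Combined with Theorem~\ref{thm1}, this yields $\inf\eqref{P}=\max\eqref{SDD}=\inf\eqref{SDP}$.

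The crucial structural observation for recovering $\overline{\bx}$ is that the SOCP constraints on $\overline{Z}_i$ in~\eqref{SDP} are exactly the membership $\overline{Z}_i\in (S^{q_i}_{sdd})^{*}$, i.e., every $2\times 2$ principal submatrix of $\overline{Z}_i$ is PSD; in particular, for every $\by^{i}\in\Omega_{i}$ with associated $\bz^{i}$ one has $\langle A_{0}^{i}+\sum_j y_j^i A_{j}^{i}+\sum_\ell z_\ell^i B_{\ell}^{i},\overline{Z}_i\rangle\ge 0$. Since $\overline{\bw}$ satisfies the moment SOCP constraints of Lemma~\ref{lemma5} with $\overline{w}_{\bze}=1$, for any first-order SDSOS-convex polynomial $p$ of degree at most $2d$ we have $L_{\overline{\bw}}(p)\ge p(\overline{\bx})$. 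Applying this to $p=h_0^i+\sum_j y_j^i h_j^i$ and substituting the equality constraints $L_{\overline{\bw}}(h_j^i)=-\langle A_j^i,\overline{Z}_i\rangle$ and $\langle B_\ell^i,\overline{Z}_i\rangle=0$ of~\eqref{SDP} would yield
\begin{align*}
\Bigl(h_0^i+\sum_j y_j^i h_j^i\Bigr)(\overline{\bx})\le \bigl[L_{\overline{\bw}}(h_0^i)+\langle A_0^i,\overline{Z}_i\rangle\bigr]-\Bigl\langle A_0^i+\sum_j y_j^i A_j^i+\sum_\ell z_\ell^i B_\ell^i,\overline{Z}_i\Bigr\rangle.
\end{align*}
For $i\ge 1$ the first bracket is nonpositive by the inequality constraint of~\eqref{SDP} and the second inner product is nonnegative by the dual-cone observation; taking the supremum over $\by^{i}\in\Omega_{i}$ gives $f_i(\overline{\bx})\le 0$, so $\overline{\bx}$ is feasible for~\eqref{P}. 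The analogous chain for $i=0$ identifies the first bracket with the optimal value of~\eqref{SDP}, whence $f_0(\overline{\bx})\le\inf\eqref{SDP}=\inf\eqref{P}$, and optimality of $\overline{\bx}$ follows.

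The hardest step will be establishing the SOCP strong duality between~\eqref{SDD} and~\eqref{SDP}: one must cast~\eqref{SDD} into a standard conic SOCP, distinguish the free variables $\lambda_j^i,z_\ell^i$ from the sign-constrained $\lambda_0^i\ge 0$ and the equality $\lambda_0^0=1$, and verify that~\textbf{(A2)} (together with a sufficiently negative choice of $\gamma$) actually supplies a relative-interior point of the full conic feasible set of~\eqref{SDD}, not merely of each $\Omega_i$. A secondary technical point is the dual cone identification $(S^{q_i}_{sdd})^{*}=\{Z\in S^{q_i}\colon\text{every }2\times 2\text{ principal submatrix of }Z\text{ is PSD}\}$; it follows rapidly from the $M^{rs}$-decomposition in the Remark together with self-duality of the $2\times 2$ PSD cone, but deserves explicit derivation since it is what makes $\langle\cdot,\overline{Z}_i\rangle\ge 0$ and thereby closes both the feasibility and optimality bounds.
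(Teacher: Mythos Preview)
Your extraction of $\overline{\bx}$ is correct and is essentially the paper's argument; your explicit identification of the constraints on $\overline{Z}_i$ as membership in the dual cone $(S^{q_i}_{sdd})^{*}$ is exactly what the paper uses (tacitly) to obtain $\bigl\langle A_0^i+\sum_jy_j^iA_j^i+\sum_\ell z_\ell^iB_\ell^i,\overline{Z}_i\bigr\rangle\ge0$.

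The gap is in your route to $\max\eqref{SDD}=\inf\eqref{SDP}$. You propose to exhibit a Slater point of \eqref{SDD} from the \textbf{(A2)} data together with ``$\gamma$ sufficiently negative,'' but \textbf{(A2)} does not force \eqref{SDD} to be strictly feasible. Two things fail. First, with $\lambda_0^i=1,\ \lambda_j^i=\widehat y_j^i$, the polynomial $\widehat p:=\sum_i\bigl(h_0^i+\sum_j\widehat y_j^ih_j^i\bigr)$ need not be bounded below, so no $\gamma$ makes $\widehat p-\gamma$ nonnegative, let alone SDSOS. Concretely, take $n=1$, $h_0^0=0$, $h_1^0=-x$, $\Omega_0=[0,1]$, $h_0^1=-1$, $h_1^1=x$, $\Omega_1=[0,1]$; then $f_0=\max(0,-x)$, $f_1=\max(-1,x-1)$, $K=(-\infty,1]$, $\inf\eqref{P}=0$, Slater (hence \textbf{(A1)}) holds, and \textbf{(A2)} holds with any $\widehat y^0,\widehat y^1\in(0,1)$, yet $\widehat p(x)=(\widehat y^1-\widehat y^0)x-1$ is unbounded below whenever $\widehat y^0\ne\widehat y^1$. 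Second---and this cannot be repaired by a smarter choice of $(\lambda,\gamma)$---in the same example \emph{every} feasible polynomial in the SDSOS slot of \eqref{SDD} has degree $\le1<2d=2$, so its Gram matrix necessarily has a zero diagonal entry and lies on the boundary of $S^{s(n,d)}_{sdd}$; thus \eqref{SDD} admits no Slater point at all even though the theorem's conclusion holds.

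The paper bypasses this by proving $\inf\eqref{SDP}\le\inf\eqref{P}$ directly rather than via strong duality between \eqref{SDD} and \eqref{SDP}: for each feasible $\bx$ of \eqref{P} it applies conic strong duality to the \emph{inner} problems $\sup_{\by^i\in\Omega_i}\{h_0^i(\bx)+\sum_jy_j^ih_j^i(\bx)\}$---for which \textbf{(A2)} is precisely the Slater condition---to produce matrices $Z_i\in(S^{q_i}_{sdd})^{*}$ with $h_0^i(\bx)+\langle A_0^i,Z_i\rangle\le f_i(\bx)$, and then checks that $\bw:=(\bx^{\ba})_{\ba\in\mathbb{N}^n_{2d}}$ together with these $Z_i$ is feasible for \eqref{SDP} with objective at most $f_0(\bx)$. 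So \textbf{(A2)} is used locally, once per index $i$, and no interior point of \eqref{SDD} is ever needed.
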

\begin{proof}
Note that $\inf\eqref{P}=\max\eqref{SDD}$ holds by Theorem~\ref{thm1} (under assumption~{\bf (A1)}).
Since the problem~\eqref{SDD} is the Lagrangian dual problem\footnote{Indeed, \eqref{SDD} and \eqref{SDP} are dual problems to each other; see the proof of \cite[Theorem 4.2]{Chieu2018}.} of the problem~\eqref{SDP}, by the usual weak duality, we see that $\inf\eqref{SDP}\geq \max\eqref{SDD}$ holds.

To prove the second equality in the first argument, we now show that $\inf\eqref{P}\geq \inf\eqref{SDP}.$
To see this, let $\bx$ be any feasible solution to the problem~\eqref{P}, and let $\gamma:=f_{0}(\bx).$
Then, we have $f_i(\bx)\leq0,$ $i = 1, \ldots, m.$
Note that
\begin{align*}
f_{i}(\bx) = \sup_{\by^i \in \Omega_{i}}\left\{h_{0}^{i}(\bx) +\sum^{s_{i}}_{j=1}y_{j}^{i}h_{j}^{i}(\bx)\right\}, \ i\in I.
\end{align*}
So, by the definition of each $f_i,$ we see that
\begin{align*}
&h_{0}^{0}(\bx) +\sum^{s_{0}}_{j=1}y_{j}^{0}h_{j}^{0}(\bx)\leq \gamma,\\
&h_{0}^{i}(\bx) +\sum^{s_{i}}_{j=1}y_{j}^{i}h_{j}^{i}(\bx)\leq f_{i}(\bx)\leq0, \ i=1,\ldots,m,
\end{align*}
for all $\by^i \in \Omega_{i},$ $i\in I,$ where each compact set $\Omega_{i}$ is given by
\begin{align*}
\Omega_{i} = \left\{(y_{1}^{i},\ldots,y_{s_{i}}^{i})\in \mathbb{R}^{s_{i}} \colon \exists \, \bz^{i}\in \mathbb{R}^{p_{i}} \textrm{ s.t. } A_{0}^{i} +\sum^{s_{i}}_{j=1}y_{j}^{i}A_{j}^{i} +\sum^{p_{i}}_{\ell=1}z_{\ell}^{i}B_{\ell}^{i} \in S_{sdd}^{q_i}\right\}.
\end{align*}
Since assumption~{\bf (A2)} holds, it follows from the strong duality theorem for conic programming \cite[Section~5.9]{Boyd2004} that there exist $Z_i\in S^{q_i},$ $i\in I,$ such that
\begin{align}\label{thm2rel1}
	\left\{
	\begin{array}{l}
		h_{0}^{0}(\bx)+\langle A_{0}^{0},Z_{0}\rangle \leq {\gamma},\\
		h_{0}^{i}(\bx)+\langle A_{0}^{i},Z_{i}\rangle\leq0, \ i=1,\ldots,m,\\
		h_{j}^{i}(\bx)+\langle A_{j}^{i},Z_{i}\rangle=0, \  i\in I, \ j=1,\ldots,s_{i},\\
		\langle B_{\ell}^{i},Z_{i}\rangle=0, \  i\in I, \ \ell=1,\ldots,p_{i},\\
		\left\|\left(
		\begin{array}{c}
			2(Z_{i})_{rs} \\
			(Z_{i})_{rr} - (Z_{i})_{ss} \\
		\end{array}
		\right)\right\|\leq (Z_{i})_{rr}+(Z_{i})_{ss}, \ 1\leq r<s\leq q_i, \ i\in I. \\
	\end{array}
	\right.
\end{align}
Letting $\bw:=\lceil \bx\rceil_d\in\mathbb{R}^{s(n,2d)},$ \eqref{thm2rel1} can be rewritten as follows:
\begin{align*}
	\left\{
	\begin{array}{l}
		\sum\limits_{\ba\in\mathbb{N}_{2d}^n}(h_{0}^{0})_{\ba} w_{\ba}+\langle A_{0}^{0},Z_{0}\rangle \leq\gamma,\\
		\sum\limits_{\ba\in\mathbb{N}_{2d}^n}(h_{0}^{i})_{\ba} w_{\ba}+\langle A_{0}^{i},Z_{i}\rangle\leq0, \ i=1,\ldots,m,\\
		\sum\limits_{\ba\in\mathbb{N}_{2d}^n}(h_{j}^{i})_{\ba} w_{\ba}+\langle A_{j}^{i},Z_{i}\rangle=0, \  i\in I, \ j=1,\ldots,s_{i},\\
		\langle B_{\ell}^{i},Z_{i}\rangle=0, \  i\in I, \ \ell=1,\ldots,p_{i},\\
		\left\|\left(
		\begin{array}{c}
			2(Z_{i})_{rs} \\
			(Z_{i})_{rr} - (Z_{i})_{ss} \\
		\end{array}
		\right)\right\|\leq (Z_{i})_{rr}+(Z_{i})_{ss}, \ 1\leq r<s\leq q_i, \ i\in I. \\
	\end{array}
	\right.
\end{align*}
In addition, from the definition of the moment matrix we see that
\begin{align*}
\M_d(\bw)=\sum_{\ba=1}^{s(n, d)}w_{\ba}\left(\lceil \bx\rceil_{d}\lceil \bx\rceil_{d}^{T}\right)_{\ba}=\lceil \bx\rceil_{d} \lceil \bx\rceil_{d}^{T} \succeq 0 ,
\end{align*}
which implies that
\begin{align*}
	\left\|\left(
	\begin{array}{cc}
		2(\M_d(\bw))_{ij} \\
		(\M_d(\bw))_{ii} - (\M_d(\bw))_{jj} \\
	\end{array}
	\right)\right\|\leq (\M_d(\bw))_{ii}+(\M_d(\bw))_{jj}, \ 1\leq i<j\leq s(n,d),
\end{align*}
and so, $(\bw, Z_{0},\ldots, Z_{m})$ is a feasible solution to the problem~\eqref{SDP}, and hence,
\begin{align*}
f_{0}(\bx)=\gamma\geq \sum\limits_{\ba\in\mathbb{N}_{2d}^n}(h_{0}^{0})_{\ba} w_{\ba}+\langle A_{0}^{0},Z_{0}\rangle\geq \inf\eqref{SDP}.
\end{align*}
Since $\bx\in K$ is arbitrary, we have $\inf\eqref{P}\geq \inf\eqref{SDP}.$

To prove the second argument, let $(\overline{\bw}, \overline{Z}_{0},\ldots,\overline{Z}_{m})$ be an optimal solution to the problem~\eqref{SDP}.
Then, we have
\begin{align}
&\sum\limits_{\ba\in\mathbb{N}_{2d}^n}(h_{0}^{i})_{\ba} \overline{w}_{\ba}+\langle A_{0}^{i},\overline{Z}_{i}\rangle\leq0, \ i=1,\ldots,m,\label{thm2rel2}\\
&\sum\limits_{\ba\in\mathbb{N}_{2d}^n}(h_{j}^{i})_{\ba} \overline{w}_{\ba}+\langle A_{j}^{i},\overline{Z}_{i}\rangle=0, \  i\in I, \ j=1,\ldots,s_{i},\label{thm2rel3}\\
&\langle B_{\ell}^{i},\overline{Z}_{i}\rangle=0, \  i\in I, \ \ell=1,\ldots,p_{i},\label{thm2rel4}\\
&\left\|\left(
	\begin{array}{c}
		2(\overline{Z}_{i})_{rs} \\
		(\overline{Z}_{i})_{rr} - (\overline{Z}_{i})_{ss} \\
	\end{array}
	\right)\right\|\leq (\overline{Z}_{i})_{rr}+(\overline{Z}_{i})_{ss}, \ 1\leq r<s\leq q_i, \ i\in I, \notag \\
	&  \left\|\left(
	\begin{array}{cc}
		2(\M_d(\overline{\bw}))_{ij} \\
		(\M_d(\overline{\bw}))_{ii} - (\M_d(\overline{\bw}))_{jj} \\
	\end{array}
	\right)\right\|\leq (\M_d(\overline{\bw}))_{ii}+(\M_d(\overline{\bw}))_{jj}, \ 1\leq i<j\leq s(n,d), \ \overline{w}_{\bze}=1.\label{thm2rel6}
\end{align}
Let $(y_{1}^{i},\ldots,y_{s_{i}}^{i})\in\Omega_{i},$ $i\in I,$ be any given.
Then, for each $i\in I,$ there exists $(z_{1}^{i},\ldots,z_{p_{i}}^{i})\in\mathbb{R}^{p_{i}}$ such that
\begin{align*}
A_{0}^{i} +\sum^{s_{i}}_{j=1}y_{j}^{i}A_{j}^{i} +\sum^{p_{i}}_{\ell=1}z_{\ell}^{i}B_{\ell}^{i} \in S_{sdd}^{q_i}.
\end{align*}
It follows from (\ref{thm2rel2}) that for each $i=1,\ldots,m,$
	\begin{align}
		0\geq\sum\limits_{\ba\in\mathbb{N}_{2d}^n}(h_{0}^{i})_{\ba} \overline{w}_{\ba}+\langle A_{0}^{i},\overline{Z}_{i}\rangle
		\geq \ & \sum\limits_{\ba\in\mathbb{N}_{2d}^n}(h_{0}^{i})_{\ba} \overline{w}_{\ba}+\langle A_{0}^{i},\overline{Z}_{i}\rangle- \left \langle A_{0}^{i} +\sum^{s_{i}}_{j=1}y_{j}^{i}A_{j}^{i} +\sum^{p_{i}}_{\ell=1}z_{\ell}^{i}B_{\ell}^{i} , \overline{Z}_{i}\right \rangle \notag\\
		=\ &\sum\limits_{\ba\in\mathbb{N}_{2d}^n}(h_{0}^{i})_{\ba} \overline{w}_{\ba} -\sum^{s_{i}}_{j=1}y_{j}^{i}\langle A_{j}^{i},\overline{Z}_{i}\rangle -\sum^{p_{i}}_{\ell=1}z_{\ell}^{i}\langle B_{\ell}^{i},\overline{Z}_{i}\rangle\notag\\
		= \ &\sum\limits_{\ba\in\mathbb{N}_{2d}^n}(h_{0}^{i})_{\ba} \overline{w}_{\ba}+\sum^{s_{i}}_{j=1}y_{j}^{i}\sum\limits_{\ba\in\mathbb{N}_{2d}^n}(h_{j}^{i})_{\ba} \overline{w}_{\ba} \notag\\
		= \ &L_{\overline{\bw}}\left(h_{0}^{i}+\sum_{j=1}^{s_{i}}y_{j}^{i}h_{j}^{i}\right),\label{thm2rel7}
	\end{align}
	where the second equality follows from (\ref{thm2rel3}) and (\ref{thm2rel4}).
Note that for each $i=1,\ldots,m,$ $h_{0}^{i}+\sum_{j=1}^{s_{i}}y_{j}^{i}h_{j}^{i}$ is a first-order SDSOS-convex polynomial.
Since $\overline{\bw}$ satisfies (\ref{thm2rel6}), by Lemma~\ref{lemma5}, we see that for each $i=1,\ldots,m,$
\begin{align}\label{thm2rel9}
	L_{\overline{\bw}}\left(h_{0}^{i}+\sum_{j=1}^{s_{i}}y_{j}^{i}h_{j}^{i}\right)\geq \left(h_{0}^{i}+\sum_{j=1}^{s_{i}}y_{j}^{i}h_{j}^{i}\right)(L_{\overline{\bw}}(x_{1}),\ldots,L_{\overline{\bw}}(x_{n}))=h_{0}^{i}(\overline{\bx})+\sum_{j=1}^{s_{i}}y_{j}^{i}h_{j}^{i}(\overline{\bx}).
\end{align}
This, together with (\ref{thm2rel7}), yields that for each $i=1,\ldots,m,$
\begin{align*}
	0\geq \sup_{\by^{i}\in\Omega_{i}}\left\{h_{0}^{i}(\overline{\bx})+\sum_{j=1}^{s_{i}}y_{j}^{i}h_{j}^{i}(\overline{\bx})\right\}=f_{i}(\overline{\bx})
\end{align*}
as each $\by^{i}\in\Omega_{i}$ is arbitrary.
So, $\overline{\bx}$ is a feasible solution to the problem~\eqref{P}.
Moreover, by similar arguments as above into \eqref{thm2rel9}, we have
\begin{align*}
	L_{\overline{\bw}}\left(h_{0}^{0}+\sum_{j=1}^{s_{0}}y_{j}^{0}h_{j}^{0}\right)\geq \left(h_{0}^{0}+\sum_{j=1}^{s_{0}}y_{j}^{0}h_{j}^{0}\right)(L_{\overline{\bw}}(x_{1}),\ldots,L_{\overline{\bw}}(x_{n}))=h_{0}^{0}(\overline{\bx})+\sum_{j=1}^{s_{0}}y_{j}^{0}h_{j}^{0}(\overline{\bx}).
\end{align*}
This implies that
\begin{align*}
	\inf\eqref{SDP}=\sum_{\ba\in\mathbb{N}^{n}_{2d}} \left(h_{0}^{0}\right)_{\ba} \overline{w}_{\ba}+\left\langle A_{0}^{0},\overline{Z}_{0}\right\rangle
	\geq& \ L_{\overline{\bw}}\left(h_{0}^{0}+\sum_{j=1}^{s_{0}}y_{j}^{i}h_{j}^{0}\right)\geq \ f_{0}(\overline{\bx})\geq \ \inf\eqref{P}=\inf\eqref{SDP}.
\end{align*}
Thus, $\overline{\bx}:=(L_{\overline{\bw}}(x_{1}),\ldots,L_{\overline{\bw}}(x_{n}))$ is an optimal solution to the problem~\eqref{P}.
\end{proof}

\subsection{An illustrative example}
We finish this section by designing a simple example, which will illustrate how to find an optimal solution to the problem~\eqref{P} by using solving an SOCP problem.
\begin{example}{\rm (see \cite[Example 4.1]{Chieu2018}.)
Consider the following $2$-dimensional first-order SDSOS-convex semi-algebraic optimiazation problem:
\begin{align}\label{P1}
\min\limits_{\bx\in\mathbb{R}^{2}}\ \  & x_{1}^{4}-x_{2} \tag{${{\rm P}_{1}}$}\\
\textrm{s.t.}\ \ \,&  x_{1}^{2}+x_{2}^{2}+2\left\|\left(x_{1}, x_{2}\right)\right\|_{2}-1\leq0. \notag
\end{align}
In order to match the notation used in the problem~\eqref{P}.
Let $h_{0}^{0}(\bx)=x_{1}^{4}-x_{2},$ $h_{j}^{0}(\bx)=0,$ $j=1,2,$ $h_{0}^{1}(\bx)=x_{1}^{2}+x_{2}^{2}-1,$ $h_{j}^{1}(\bx)=2x_{j},$ $j=1,2,$ and $\Omega_{1}$ is given by
\begin{align*}
		\Omega_{1} &=\left\{\left(y_{1}^{1}, y_{2}^{1}\right) \colon
		A= \left(\begin{array}{lll}
			1 & 0 & 0 \\
			0 & 1 & 0 \\
			0 & 0 & 1
		\end{array}\right)+y_{1}^{1}\left(\begin{array}{lll}
			0 & 0 & 1 \\
			0 & 0 & 0 \\
			1 & 0 & 0
		\end{array}\right)+y_{2}^{1}\left(\begin{array}{lll}
			0 & 0 & 0 \\
			0 & 0 & 1 \\
			0 & 1 & 0
		\end{array}\right)
\in S_{ sdd}^{3}\right\} \\
		&=\left\{\left(y_{1}^{1}, y_{2}^{1}\right) \colon \exists \,  D= {\rm diag}\left ( d_1,d_2,d_3 \right ) \textrm{ s.t. } DAD \in S_{dd}^{3}, d_{i}>0,i=1,2,3\right\} \\
		&=\left\{\left(y_{1}^{1}, y_{2}^{1}\right) \colon d_{1} \geq d_3 \left |y_1^1  \right | , d_{2} \geq d_3 \left |y_2^1  \right |,  d_{3} \geq d_1 \left |y_1^1  \right |+d_2 \left |y_2^1  \right | \right\} \\
		&=\left\{\left(y_{1}^{1}, y_{2}^{1}\right) \colon (y_{1}^{1})^{2}+(y_{2}^{1})^{2}\leq1  \right\}.
\end{align*}
We see that for each $(y_{1}^{1},y_{2}^{1})\in\Omega_{1},$ $h_{0}^{1}(\bx)+y_{1}^{1}h_{1}^{1}(\bx)+y_{2}^{1}h_{2}^{1}(\bx)$ is a first-order SDSOS-convex polynomial.
Let $f_{0}(\bx)= x_{1}^{4}-x_{2}$ and $f_{1}(\bx)=x_{1}^{2}+x_{2}^{2}+2\left\|\left(x_{1}, x_{2}\right)\right\|_{2}-1.$
Then, $f_{1}(\bx)=\sup_{\left(y_{1}^{1}, y_{2}^{1}\right) \in \Omega_{1}}\left\{h_{0}^{1}(\bx)+\right.   \left.y_{1}^{1} h_{1}^{1}(\bx)+y_{2}^{1} h_{2}^{1}(\bx)\right\},$ and so, $f_{1}$ is a first-order SDSOS-convex semi-algebraic function.
Moreover, $f_{0}(\bx)$ is a first-order SDSOS-convex polynomial and so it is also a first-order SDSOS-convex semi-algebraic function.

Let $\widehat{\bx}=(1/3,0).$
Then, it can be verified that $f_{1}(\widehat{\bx})=-2/9 < 0,$ which means that the Slater condition is satisfied, and so, assumption~{\bf (A1)} is also satisfied.
Moreover, letting $(\widehat{y}_{1}^{1},\widehat{y}_{2}^{1})=(0,0),$ so assumption~{\bf (A2)} holds.
	
We now consider the following relaxation dual problem for the problem~\eqref{P1}:
\begin{align*}
\sup\limits_{\substack{\gamma,\lambda_{0}^{1}, \lambda_{j}^{i}}} \ \ & \gamma \\
\textrm{s.t.}\quad & \lambda_{0}^{0}h^{0}_{0}+\lambda_{0}^{1} h_{0}^{1}+\sum_{j=1}^{2} \lambda_{j}^{1} h_{j}^{1}-\gamma \in \tilde \Sigma_{4}^{2}[\bx],\\
& \left(\begin{array}{ccc} \lambda_{0}^{1} & 0 & \lambda_{1}^{1} \\  0 & \lambda_{0}^{1} & \lambda_{2}^{1} \\ \lambda_{1}^{1} & \lambda_{2}^{1} & \lambda_{0}^{1} \\ \end{array}\right) \in S_{sdd}^{3},\,\\
&\lambda_{0}^{0}=1, \ \lambda_{0}^{1}\geq0,\  \lambda_{j}^{1}\in\mathbb{R}, \  \  j = 1, 2.
\end{align*}
Invoking Proposition~\ref{proposition1}, there exists $Q\in S_{sdd}^{s(4)}(=S_{ sdd}^{15})$ such that
\begin{align}\label{ex1rel1}
\lambda_{0}^{0}h^{0}_{0}+\lambda_{0}^{1} h_{0}^{1}+\sum_{j=1}^{2} \lambda_{j}^{1} h_{j}^{1}-\gamma=\ \langle \lceil \bx\rceil_4\lceil \bx\rceil_4^{T},Q\rangle, \  \forall \bx\in\mathbb{R}^{2}.
\end{align}
Thanks to \cite[Theorem 1]{Reznick1978}, we can reduce the size of $\lceil \bx\rceil_4,$ that is, 6, and so $Q \in S^6_{+}.$
In more detail, $\lceil \bx\rceil_4=(1,x_{1},x_{2},x_{1}^{2},x_{1}x_{2},x_{2}^{2})^{T}$ in $(\ref{ex1rel1}).$
With this fact, we formulate the following dual problem for the problem~\eqref{P1},
\begin{align*}\label{SOCP1}
\sup\limits_{\substack{\gamma,Q}} \ \ & \gamma \tag{$\widehat{\rm Q}_{1}$}\\
\textrm{s.t.}\ \ \,   & -\lambda_{0}^{1}-\gamma=Q_{11}, \, \lambda^{1}_{1}=Q_{12}, \, -1+2\lambda^{1}_{2}=2Q_{13}, \\
		& \lambda^{1}_{0}=2Q_{14}+Q_{22},  \,\lambda^{1}_{0}=Q_{33}+2Q_{16}, \\
		& 1=Q_{44}, \, 0=Q_{15}=Q_{25}=Q_{35}=Q_{45}=Q_{55}, \\
		&  0=Q_{23}=Q_{24}=Q_{34}, \, 0=Q_{26}=Q_{36}=Q_{46}=Q_{56}=Q_{66}, \\
		& \left(\begin{array}{ccc} \lambda_{0}^{1} & 0 & \lambda_{1}^{1} \\  0 & \lambda_{0}^{1} & \lambda_{2}^{1} \\ \lambda_{1}^{1} & \lambda_{2}^{1} & \lambda_{0}^{1} \\ \end{array}\right)\in S_{sdd}^{3},  \\
		& \gamma\in\mathbb{R}, \,Q\in S_{sdd}^6, \,\lambda_{0}^{1}\geq0,\  \,\lambda_{j}^{1}\in\mathbb{R}, \, j=1, 2.
\end{align*}
Now, solving problem \eqref{SOCP1} using CVX \cite{Grant2013} in MATLAB gives us its optimal value $\gamma=-0.414214 \approx 1-\sqrt{2}.$
It follows from Theorem~\ref{thm1} that $\overline{\gamma}:=\gamma=-0.414214 \approx 1-\sqrt{2}$ is the optimal value of the problem \eqref{P1}.

To proceed, we formulate the following dual problem of the problem~\eqref{SOCP1},
\begin{align}\label{SOCP}
\inf\limits_{\substack{w\in\mathbb{R}^{15}\\Z_{i}\succeq0}} \ \ & \sum_{\ba\in\mathbb{N}^{n}_{2d}} w_{40}-w_{02}\tag{${\rm Q}_{1}$}\\
{\rm s.t.}\ \ \
		&w_{20}+w_{02}-w_{00}+\langle A_{0}^{1},Z_{1}\rangle\leq0, \notag \\
		&2 w_{10}+\langle A_{1}^{1},Z_{1}\rangle=0, \notag\\
		&2 w_{01}+\langle A_{2}^{1},Z_{1}\rangle=0, \notag \\
		& \left\|\left(
		\begin{array}{c}
			2(Z_{1})_{rs} \\
			(Z_{1})_{rr} - (Z_{1})_{ss} \\
		\end{array}
		\right)\right\|\leq (Z_{1})_{rr}+(Z_{1})_{ss}, \ 1\leq r<s\leq 3,  \nonumber \\
		&\mathbf{M}_{l}(\bw)=\left(\begin{array}{llllll}
			w_{00} & w_{10} & w_{01} & w_{20} & w_{11} & w_{02} \\
			w_{10} & w_{20} & w_{11} & w_{30} & w_{21} & w_{12} \\
			w_{01} & w_{11} & w_{20} & w_{21} & w_{12} & w_{03} \\
			w_{20} & w_{30} & w_{21} & w_{40} & w_{31} & w_{22} \\
			w_{11} & w_{21} & w_{12} & w_{31} & w_{22} & w_{13} \\
			w_{02} & w_{12} & w_{03} & w_{22} & w_{13} & w_{04}
		\end{array}\right) , \notag \\
		& \left\|\binom{2\left(\mathbf{M}_{l}(\bw)\right)_{i j}}{\left(\mathbf{M}_{l}(\bw)\right)_{i i}-\left(\mathbf{M}_{l}(\bw)\right)_{j j}}\right\| \leq\left(\mathbf{M}_{l}(\bw)\right)_{i i}+\left(\mathbf{M}_{l}(\bw)\right)_{j j}, 1 \leq i, j \leq 6 , \notag \\
		&  w_{\bze}=1.\nonumber
\end{align}
Solving the problem \eqref{SOCP} using CVX \cite{Grant2013} in MATLAB, we obtain the optimal value $1-\sqrt{2}$ and an optimal solution $(\overline \bw,\overline Z_{1})$ for the problem~\eqref{SOCP}, where
\begin{align*}
&\overline{\bw}=(1,0,0.4142,0,0,0,0,0,0,0,0,0,0,0,0)^{T},\\
&\overline{Z}_{1}\approx\left(
		\begin{array}{rrr}
			0 & 0 & 0 \\
			0 & 0.4142 & -0.4142 \\
			0 & -0.4142 & 0.4142 \\
		\end{array}
		\right).
\end{align*}
Thus, Theorem~\ref{thm2} implies that $\overline{\bx}=(0,0.4142)$ is an optimal solution to the problem~\eqref{P1}.
}\end{example}


\section{Applications to Robust Optimization}\label{sect:5}
In this section, we study a class of robust optimization problems by using the results obtained in Section~\ref{sect:4}.
Among others, we focus on the following robust first-order SDSOS-convex optimization problem under constraint data uncertainty,
\begin{align}\label{robust}
\min_{\bx \in \mathbb{R}^{n}} & \quad f_{0}(\bx) \tag{RP} \\
\text{s.t.} & \quad g_{i}^{(0)}(\bx)+\sum_{j=1}^{t_{i}} u_{i}^{(j)} g_{i}^{(j)}(\bx)+\sum_{j=t_{i}+1}^{s_i} u_{i}^{(j)} g_{i}^{(j)}(\bx) \leq 0, \forall u_{i} \in \mathcal{U}_{i}, i=1, \ldots, m, \nonumber
\end{align}
where $f_0,$ $g_{i}^{(j)},\ i = 1, \ldots, m,\ j = 0, 1, \ldots, t_{i},$ are first-order SDSOS-convex polynomials with degree $d,$ $g_{i}^{(j)},\ i = 1, \ldots, m,\ j = t_{i}+1, \ldots, s_i,$ are affine functions, and $u_{i}$ are uncertain parameters belonging to uncertainty sets $\mathcal{U}_{i},$ $i = 1, \ldots, m.$
	
At this point, we would like to mention that Chieu et al. \cite{Chieu2018} has studied robust SOS-convex polynomial optimization problems under structured constraint uncertainty, namely, they first introduced the notion of restricted spectrahedron data uncertainty set, which is a convex compact set given by
\begin{align*}
\mathcal{U}_{i}^{r e s}=\left\{\left(u_{i}^{(1)}, \ldots, u_{i}^{\left(t_{i}\right)}, u_{i}^{\left(t_{i}+1\right)}, \ldots, u_{i}^{(s_i)}\right)\right. & \in \mathbb{R}^{s_i} \colon A_{i}^{0}+\sum_{j=1}^{s_i} u_{i}^{(j)} A_{i}^{j} \succeq 0, \\
		\left(u_{i}^{(1)}, \ldots, u_{i}^{\left(t_{i}\right)}\right) & \left.\in \mathbb{R}_{+}^{t_{i}},\ \left(u_{i}^{\left(t_{i}+1\right)}, \ldots, u_{i}^{(s)}\right) \in \mathbb{R}^{s_i-t_{i}}\right\} .
\end{align*}
Then, under Slater-type conditions, they showed that the robust SOS-convex optimization problem with such uncertainty admits an exact SDP relaxation and further proposed how to extract optimal solutions from this relaxation.

Here, we present a parallel extension of robust optimization where the underlying data satisfies first-order SDSOS-convexity.
Now, we introduce the notion of SDD-restricted spectrahedron uncertainty set which is a convex compact set given by
\begin{align*}
\mathcal{U}_{i}^{sdd}=\left\{\left(u_{i}^{(1)}, \ldots, u_{i}^{\left(t_{i}\right)}, u_{i}^{\left(t_{i}+1\right)}, \ldots, u_{i}^{(s_i)}\right)\right. & \in \mathbb{R}^{s_i} \colon A_{i}^{0}+\sum_{j=1}^{s_i} u_{i}^{(j)} A_{i}^{j} \in S_{sdd}^{q_{i}}, \\
	\left(u_{i}^{(1)}, \ldots, u_{i}^{\left(t_{i}\right)}\right) & \left.\in \mathbb{R}_{+}^{t_{i}},\ \left(u_{i}^{\left(t_{i}+1\right)}, \ldots, u_{i}^{(s_i)}\right) \in \mathbb{R}^{s_i-t_{i}}\right\},
\end{align*}
then we turn to the robust first-order SDSOS-convex optimization problem with SDD-restricted spectrahedron uncertainty set.
In other word, we restrict ourselves to the problem~\eqref{robust} with $\mathcal{U}_{i} = \mathcal{U}_{i}^{sdd}.$
Observe that this type of robust optimization problem can be seen as a first-order SDSOS-convex semialgebraic program.
To see this, we define $g_{i}\left(\bx, u_{i}\right) := g_{i}^{(0)}(\bx)+\sum\limits_{j=1}^{t_{i}} u_{i}^{(j)} g_{i}^{(j)}(\bx) + \sum\limits_{j=t_{i}+1}^{s_i} u_{i}^{(j)} g_{i}^{(j)}(\bx) ,$ and $f_{i}(\bx) := \sup\limits_{u_{i} \in \mathcal{U}_{i}^{sdd}}\left\{g_{i}\left(\bx, u_{i} \right)\right\}$ for $i = 1, \ldots, m.$
By the construction of the SDD-restricted spectrahedron uncertainty set, each $g_{i}\left(\cdot, u_{i}\right)$ is a first-order SDSOS-convex polynomial whenever $u_{i} \in \mathcal{U}_{i}^{sdd}.$ Furthermore, each SDD-restricted spectrahedron set $\mathcal{U}_{i}^{sdd}$ can be expressed as
\begin{equation}\label{costraint}
\mathcal{U}_{i}^{sdd}=\left\{\left(u_{i}^{(1)}, \ldots, u_{i}^{\left(t_{i}\right)}, u_{i}^{\left(t_{i}+1\right)}, \ldots, u_{i}^{(s_i)}\right) \in \mathbb{R}^{s} \colon \widetilde{A}_{i}^{0}+\sum_{j=1}^{s_i} u_{i}^{(j)} \widetilde{A}_{i}^{j} \in S_{sdd}^{q_{i}} \right\},
\end{equation}
where
$\tilde{A}_{i}^{0} =\left(\begin{array}{cc}
			0_{t_{i} \times t_{i}} & 0 \\
			0 & A_{i}^{0}
		\end{array}\right),$
$\tilde{A}_{i}^{j}=\left(\begin{array}{cc}
			\operatorname{diag} e^{j} & 0 \\
			0 & A_{i}^{j}
		\end{array}\right)$
for $j =1, \ldots, t_{i},$
$\tilde{A}_{i}^{j}  =
		\left(\begin{array}{cc}
			0_{t_{i} \times t_{i}} & 0 \\
			0 & A_{i}^{j}
		\end{array}\right)$ for $j = t_{i}+1, \ldots, s_i,$
and $e^{j} \in \mathbb{R}^{t_{i}}$ denotes the vector whose $j$-th element equals to one and $0$ otherwise.
This reformulation allows us to write the robust problem with a corresponding relaxation problem,
\begin{align}\label{robust-r}
\sup_{\gamma, \lambda_{0}^{i}, \lambda_{j}^{i}} & \ \gamma \tag{${\rm \hat{Q}}$} \\
\text { s.t. } & \ f + \sum_{i=1}^{m}\left(\lambda_{0}^{i} g_{0}^{i}+\sum_{j=1}^{s_{i}} \lambda_{j}^{i} g_{j}^{i}\right)-\gamma \in \widetilde{\Sigma}[\mathbf{x}]_{2 d}, \nonumber \\
		& \ \lambda_{0}^{i} \tilde{A_{0}^{i}}+\sum_{j=1}^{s_{i}} \lambda_{j}^{i} \tilde{A_{j}^{i}} \in S_{s d d}^{q_{i}} , \nonumber \\
		& \ \lambda_{0}^{i} \geq 0, \ \lambda_{j}^{i} \in \mathbb{R},\ j = 1, \ldots, s_{i},\  i =  1, \ldots, m. \nonumber
\end{align}
The dual problem of the problem~\eqref{robust-r} is formulated as follows,
\begin{align}\label{robust-dual}
\inf_{\substack{\mathbf{w} \in \mathbb{R}^{s(n, 2 d)} \\  Z_{i} \in S^{q i}, i \in I}} & \ \sum_{\alpha \in \mathbb{N}_{2 d}^{n}} f_{\alpha} w_{\alpha} \tag{Q} \\
\text { s.t. } &\ \sum_{\alpha \in \mathbb{N}_{2 d}^{n}}\left(g_{0}^{i}\right)_{\alpha} w_{\alpha}+\left\langle \tilde{A_{0}^{i}}, Z_{i}\right\rangle \leq 0, \ i=1, \ldots, m, \nonumber \\
	 	&\ \sum_{\alpha \in \mathbb{N}_{2 d}^{n}}\left(g_{j}^{i}\right)_{\alpha} w_{\alpha}+\left\langle \tilde{A_{j}^{i}} , Z_{i}\right\rangle=0,\ i=1, \ldots,m,\ j=1, \ldots, s_{i}, \nonumber \\
	 	&\ \left\|\binom{2\left(Z_{i}\right)_{r s}}{\left(Z_{i}\right)_{r r}-\left(Z_{i}\right)_{s s}}\right\| \leq\left(Z_{i}\right)_{r r}+\left(Z_{i}\right)_{s s},\ 1 \leq r<s \leq q_{i},\ i=1, \ldots,m , \nonumber \\
	 	& \ \left\|\binom{2\left(\mathbf{M}_{d}(\mathbf{w})\right)_{i j}}{\left(\mathbf{M}_{d}(\mathbf{w})\right)_{i i}-\left(\mathbf{M}_{d}(\mathbf{w})\right)_{j j}}\right\| \leq\left(\mathbf{M}_{d}(\mathbf{w})\right)_{i i}+\left(\mathbf{M}_{d}(\mathbf{w})\right)_{j j},\ 1 \leq i<j \leq s(n, d), \nonumber \\
	 	& \ w_{0}=1 .\nonumber
\end{align}
	
Up to now, during the above process, we have established an exact SOCP relaxation for the robust first-order SDSOS-convex optimization problem with SDD-restricted spectrahedron data uncertainty, and demonstrated how to recover an optimal solution from its SOCP relaxation.
As a direct consequence of Theorems~\ref{thm1} and \ref{thm2}, we have the following result.
\begin{corollary}\label{final_cor}
Consider the problem~\eqref{robust} under SDD-restricted spectrahedron data uncertainty$,$ i.e.$,$ $\mathcal{U}_{i}=\mathcal{U}_{i}^{sdd}$ where $\mathcal{U}_{i}^{sdd}$ are convex compact sets given as in \eqref{costraint}.
Suppose that the convex cone
\begin{equation}\label{convex-cone}
\bigcup_{\mathbf{y}^{i} \in \Omega_{i}, \lambda_{i} \geq 0} \operatorname{epi}\left(\sum_{i=0}^{m} \lambda_{i}\left(g_{i}^{(0)}+\sum_{j=1}^{s_{i}} u_{i}^{(j)} g_{i}^{(j)}\right)\right)^{*}
\end{equation}
is closed.
Then$,$ the following statements hold.
\begin{enumerate}[\upshape (i)]
\item $\inf\eqref{robust} = \sup\eqref{robust-r}.$
\item Suppose in addition that$,$ for each $i = 1, \ldots, m,$ there exist $\hat{\mathbf{y}}^{i} \in \mathbb{R}^{s_{i}}$ and a diagonal matrix $D_{i}$ with all positive diagonal entries such that $D^{i}\left(\tilde{A_{0}^{i}}+\sum_{j=1}^{s_{i}} \hat{y}_{j}^{i} \tilde{A_{j}^{i}}\right) D^{i}$ is strictly diagonally dominant.
Then$,$ we have
\begin{align*}
\inf\eqref{robust} = \sup\eqref{robust-r} = \inf\eqref{robust-dual}.
\end{align*}
Let $(\overline{\mathbf{w}}, \overline{Z}_{0},\ldots,\overline{Z}_{m})$ be an optimal solution to the problem~\eqref{robust-dual}.
Then
\begin{align*}
\overline{\bx} = \left(L_{\overline{\mathbf{w}}}(x_{1}), \ldots, L_{\overline{\mathbf{w}}}(x_{n})\right)
\end{align*}
is an optimal solution to the problem \eqref{robust}.
\end{enumerate}
\end{corollary}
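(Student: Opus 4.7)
The plan is to cast the robust problem~\eqref{robust} with $\mathcal{U}_i = \mathcal{U}_i^{sdd}$ as an instance of the first-order SDSOS-convex semi-algebraic program~\eqref{P}, so that both assertions follow immediately from Theorems~\ref{thm1} and~\ref{thm2}. To this end I set $f_i(\bx) := \sup_{u_i \in \mathcal{U}_i^{sdd}} g_i(\bx, u_i)$ for $i = 1, \ldots, m$, identify the $h_j^i$'s in \eqref{function f_{i}} with the $g_i^{(j)}$'s, and identify the matrices $A_j^i$ with the $\tilde{A}_i^j$'s from \eqref{costraint} (there are no auxiliary $B_\ell^i$, i.e.\ $p_i = 0$). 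The objective $f_0$ is already a first-order SDSOS-convex polynomial and hence a first-order SDSOS-convex semi-algebraic function with trivial index set.

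The key verification is that for every $u_i \in \mathcal{U}_i^{sdd}$, the polynomial $g_i(\cdot, u_i)$ is first-order SDSOS-convex in $\bx$. This combines two observations: (a) the block-diagonal construction of $\tilde{A}_i^j$ for $j = 1, \ldots, t_i$ embeds $\operatorname{diag}(e^j)$ as an upper-left block, so the SDD membership in \eqref{costraint} forces $u_i^{(j)} \geq 0$ for those indices, and nonnegative combinations of first-order SDSOS-convex polynomials remain first-order SDSOS-convex; (b) the terms with $j > t_i$ are affine in $\bx$, so their associated $h$-polynomial vanishes identically and is trivially SDSOS. Property (i) of Definition~\ref{SOS-SA} then holds, while property (ii) is built into the representation~\eqref{costraint}. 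Consequently \eqref{robust} is of the form~\eqref{P}, and the relaxations \eqref{robust-r} and \eqref{robust-dual} are exactly the specializations of \eqref{SDD} and \eqref{SDP} in this setting (with $\lambda_0^0 = 1$ absorbed into the objective coefficients of $f_0$).

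For part (i), the hypothesis that the convex cone in \eqref{convex-cone} is closed is precisely assumption~\textbf{(A1)} for the reformulated problem, so Theorem~\ref{thm1} yields $\inf\eqref{robust} = \sup\eqref{robust-r}$. For part (ii), the additional strict-diagonal-dominance hypothesis on $D^i(\tilde{A}_0^i + \sum_j \hat{y}_j^i \tilde{A}_j^i) D^i$ is exactly assumption~\textbf{(A2)} (there is no $\hat{\bz}^i$ contribution since $p_i = 0$), so Theorem~\ref{thm2} gives the triple equality $\inf\eqref{robust} = \sup\eqref{robust-r} = \inf\eqref{robust-dual}$ together with the recovery formula $\overline{\bx} = (L_{\overline{\bw}}(x_1), \ldots, L_{\overline{\bw}}(x_n))$.

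The main obstacle I anticipate is bookkeeping rather than mathematics: making the correspondence between the robust problem's data $(g_i^{(j)}, \tilde{A}_i^j, \mathcal{U}_i^{sdd})$ and the semi-algebraic problem's data $(h_j^i, A_j^i, \Omega_i)$ completely transparent, and in particular justifying step~(a) above by extracting the sign constraint $u_i^{(j)} \geq 0$ from the SDD property of the block-augmented matrix. Once this dictionary is laid out, the corollary reduces to quoting Theorems~\ref{thm1} and~\ref{thm2} verbatim.
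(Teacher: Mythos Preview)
Your proposal is correct and follows essentially the same approach as the paper: the paper also presents Corollary~\ref{final_cor} as a direct consequence of Theorems~\ref{thm1} and~\ref{thm2}, obtained by rewriting~\eqref{robust} with $\mathcal{U}_i=\mathcal{U}_i^{sdd}$ as an instance of~\eqref{P} via $f_i(\bx)=\sup_{u_i\in\mathcal{U}_i^{sdd}}g_i(\bx,u_i)$ and the block-diagonal matrices $\tilde{A}_i^j$ in~\eqref{costraint}, with no $B_\ell^i$ terms. Your identification of the dictionary $(g_i^{(j)},\tilde{A}_i^j,\mathcal{U}_i^{sdd})\leftrightarrow(h_j^i,A_j^i,\Omega_i)$ and your verification that the sign constraints $u_i^{(j)}\ge 0$ for $j\le t_i$ are encoded in the SDD condition are exactly the bookkeeping the paper relies on implicitly.
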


\begin{remark}{\rm
Note that the cone \eqref{convex-cone} in Corollary~\ref{final_cor} is convex due to the concavity of $g_i$ with respect to the parameters $u_i$; see, e.g., \cite[Proposition 2.3]{Jeyakumar2010}.
}\end{remark}

\section{Conclusions}\label{sect:6}
In this paper, we have introduced a new class of {\it nonsmooth} convex functions --- first-order SDSOS-convex semi-algebraic functions.
Under suitable assumptions, we have proved that the optimal value and optimal solutions of optimization problems with first-order SDSOS-convex semi-algebraic functions can be obtained by solving an SOCP problem.
The results rely on the representation theory of nonnegative polynomials under SDSOS polynomials and utilize the associated Jensen's inequality to recover the global optimal solution from the SOCP relaxation.

\subsection*{Acknowledgements}
This work was supported by the National Research Foundation of Korea (NRF) grant funded by the Korea government (MSIT) (NRF-2021R1C1C2004488).
Liguo Jiao and Jae Hyoung Lee contributed equally as corresponding authors.

\subsection*{Availability of data and materials} The codes and datasets generated in this study are available from the first author on reasonable request.

\subsection*{Disclosure statement}
The authors have no conflict of interest to declare that are relevant to the content of this article.


\small


\end{document}